\newtheorem{example}{Example}
\newtheorem{lem}{Lemma}
\newtheorem{prop}{Proposition}
\newtheorem{thm}{Theorem}
\crefname{hypothesis}{Hypothesis}{Hypotheses}
\newcommand{\e}{\mathrm{e}}        
\newcommand{\nrm}[1]{\left\| #1 \right\|}
\newcommand{\eipx}[2]{\left[ #1 , #2 \right]_{\rm x}}
\newcommand{\eipy}[2]{\left[ #1 , #2 \right]_{\rm y}}
\newcommand{\eipz}[2]{\left[ #1 , #2 \right]_{\rm z}}
 \newcommand{\hf}{\frac{1}{2}}
\title{An Efficient Unconditionally Energy-Stable Numerical Scheme for Bose--Einstein Condensate\thanks{
The second author was partially supported by National Science Foundation of the United States (Grant No. DMS-2309548).	The third author was partially supported by National Natural Science Foundation of China (Grant No. 12422116), Guangdong Basic and Applied Basic Research Foundation (Grant No. 2023A1515012199), Shenzhen Science and Technology Innovation Program (Grant No. JCYJ20220530143803007, RCYX20221008092843046), Guangdong Provincial Key Laboratory of Mathematical Foundations for Artificial Intelligence (2023B1212010001), and Hetao Shenzhen-Hong Kong Science and Technology Innovation Cooperation Zone Project (No. HZQSWS-KCCYB-2024016).  }}
\author{Jing Guo\thanks{School of Mathematics and Statistics, Guangdong University of Technology, Guangdong, Guangzhou 510006, China (\email{jingguo@gdut.edu.cn}).}\and
Cheng Wang\thanks{Corresponding author. Department of Mathematics; University of Massachusetts; North Dartmouth, MA 02747, USA (\email{cwang1@umassd.edu}).}\and
Dong Wang  	\thanks{ School of Science and Engineering, The Chinese University of Hong Kong, Shenzhen, Guangdong 518172, China; Shenzhen International Center for Industrial and Applied Mathematics, Shenzhen Research Institute of Big Data, Guangdong 518172, China (\email{wangdong@cuhk.edu.cn}).}
}
\begin{document}
\maketitle
\begin{abstract}
A numerical framework is proposed and analyzed for computing the ground state of Bose--Einstein condensates. A gradient flow approach is developed, incorporating both a Lagrange multiplier to enforce the $L^2$ conservation and a free energy dissipation. An explicit approximation is applied to the chemical potential, combined with an exponential time differencing (ETD) operator to the diffusion part, as well a stabilizing operator, to obtain an intermediate numerical profile. Afterward, an $L^2$ normalization is applied at the next numerical stage. A theoretical analysis reveals a free energy dissipation under a maximum norm bound assumption for the numerical solution, and such a maximum norm bound could be recovered by a careful convergence analysis and error estimate. In the authors' knowledge, the proposed method is the first numerical work that preserves the following combined theoretical properties: (1) an explicit computation at each time step, (2) unconditional free energy dissipation, (3) $L^2$ norm conservation at each time step, (4) a theoretical justification of convergence analysis and optimal rate error estimate. Comprehensive numerical experiments validate these theoretical results, demonstrating excellent agreement with established reference solutions.

\end{abstract}

\begin{keywords}
Bose--Einstein condensates,  ground state, normalized gradient flow, $L^2$ normalization, energy stability, convergence analysis and error estimate 
\end{keywords}
\begin{AMS}
    65K10, 65M06, 65M12, 65Z05, 81-08
\end{AMS}

\section{Introduction}
Bose--Einstein condensation (BEC) describes a macroscopic quantum phenomenon in which many bosons occupy the lowest quantum state, forming a coherent matter wave. It provides a versatile platform for studying quantum many-body physics \cite{Bloch2008, Leggett2001}, quantum simulation \cite{Georgescu2014}, and precision sensing \cite{Cronin2009}. A theoretical understanding of BEC relies on the Gross--Pitaevskii theory, where the complex valued, ground-state wave function $\phi$ is obtained by minimizing
\begin{equation} \label{eng_orig}
    E(\phi) = \int_{\Omega} \left( \frac{1}{2} |\nabla \phi|^2 + V({\bf x}) |\phi|^2 
    + \frac{\beta}{2} |\phi|^4  \right) \, d {\bf x},
\end{equation}
subject to the normalization constraint
\begin{equation}\label{cons}
\int_{\Omega} |\phi(\mathbf{x})|^2 d\mathbf{x} = 1,
\end{equation}
where $\Omega \subset \mathbb{R}^d$. The corresponding Euler--Lagrange equation, known as the Gross--Pitaevskii equation (GPE), is given by
\begin{equation}\label{eig_prob}
    \mu (\phi) = -\frac{1}{2}\Delta\phi + V\phi + \beta|\phi|^2\phi,
\end{equation}
governs the equilibrium configuration of the condensate.

The scientific computation of BEC has attracted considerable research attention due to its fundamental importance in quantum physics. A particularly prominent approach is the gradient flow method, which employs imaginary-time evolution to implement gradient descent for the energy functional \cite{BaoCai,BaoDu04,Bao2008,BaoWang07,CaiLiu, Faou2018, LiuCai, Zhuang2019} and  a projection step is adopted   to maintain the mass constraint \eqref{cons}. 

Optimization-based approaches offer an alternative computational paradigm by directly minimizing the energy functional on an appropriately defined constraint manifold, including the Riemannian gradient method \cite{YinHuaCai, YinHuaZha}, the Riemannian conjugate gradient method \cite{ShuTang}, and the Riemannian Newton method \cite{JiaAndWen, TianCai}, etc. 

Eigenvalue-based approaches reformulate the computation of ground states as the solution of a stationary nonlinear eigenvalue problem. Well-established computational techniques in this category include the J-method \cite{RobPatDan},  finite element discretization \cite{Chen2011a, ChenHeZh}, and mixed finite element formulation \cite{Gallistl2025}.

While the computation of ground states has been extensively investigated from an algorithmic perspective, the corresponding numerical analysis remains relatively limited and presents significant challenges. 
 For the normalized gradient flow methods, a local convergence analysis was established in \cite{Faou2018}, with a global convergence result provided  in \cite{Henning2020}.  Additionally, the authors in \cite{ChenHeZh, Henning2023, HenYad} derived error estimates for the Gross--Pitaevskii eigenvalue problems.


The present work makes two primary contributions.  First, we develop an explicit, norm-preserving numerical scheme that maintains computational efficiency. In the numerical design, an explicit approximation is applied to the nonlinear term and the external confinement term, while the diffusion part is computed using an exponential time differencing (ETD) formula, to obtain an intermediate numerical profile. In particular, an artificial stabilization term is introduced in this intermediate stage, for the sake of a theoretical justification of energy dissipation, as will be proved in the later part. Such an artificial stabilization term would not cause additional computational cost, since it could be combined with the temporal discretization part. Afterward, an $L^2$ normalization is performed at the next numerical stage. As a second contribution, rigorous proofs are provided to the proposed numerical scheme. A theoretical analysis reveals a free energy dissipation under a maximum norm bound assumption for the numerical solution, and such a maximum norm bound could be recovered by a careful convergence analysis and error estimate. In turn, the proposed method preserves the following theoretical properties: (1) an explicit computation at each time step, (2) unconditional free energy dissipation, (3) $L^2$ norm conservation at each time step, (4) a theoretical justification of convergence analysis and optimal rate error estimate. To the best of our knowledge, this is the first such numerical work that preserves the combined four theoretical properties in the computation of the BEC problem. 


The paper is structured as follows. The mathematical formulation is reviewed in Section~\ref{sec: numerical scheme}, and the numerical scheme is proposed. The energy stability analysis is provided in Section \ref{sec: energy stability}, while the convergence analysis and optimal rate error estimate is established in Section \ref{sec: convergence analysis}. A few numerical examples are presented in Section \ref{sec: numerical results}. Finally, some concluding remarks are made in Section \ref{sec: conclusion}.

\section{Mathematical formulation and the numerical scheme} \label{sec: numerical scheme} 

\subsection{Gradient flow with a Lagrange multiplier}
The normalized gradient flow with Lagrange multiplier  takes the form of 
\begin{equation}\label{grad_flow_cont}
\frac{\partial \phi}{\partial t} = \frac{1}{2}\Delta \phi - V(\mathbf{x})\phi - \beta|\phi|^2\phi + \lambda(t)\phi,
\end{equation}
where the time-dependent Lagrange multiplier \cite[(2.14)]{BaoDu04} 
\begin{equation}\label{lambda_cont}
\lambda(t) =\frac{ ( -\frac{1}{2}\Delta\phi + V(\mathbf{x})\phi + \beta|\phi|^2\phi, \phi )}{\left\Vert \phi\right\Vert^2} 
= \frac{ \frac12 \| \nabla \phi \|^2 + ( V ({\bf x}) \phi + \beta | \phi |^2 \phi , \phi)}{\| \phi \|^2} , 
\end{equation}
enforces the $L^2$ normalization constraint. This system preserves the $L^2$ norm, and an energy dissipation becomes available: 
\begin{equation}\label{energy_decay}
\frac{d}{dt}E(\phi) \leq 0.
\end{equation}

\subsection{A proposed numerical scheme}
The standard centered finite difference discretization is used to compute the ground state of \eqref{eng_orig} with $\beta>0$  on the computational domain $\Omega = (0, 1)^3$,  with the
periodic boundary condition taken into consideration.  An extension to other boundary conditions, such as the homogeneous Neumann one, would be straightforward. For simplicity of presentation, a uniform spatial mesh is taken, with $\Delta x = \Delta y = \Delta z = h = \frac{1}{N}$, with $N \in\mathbb{N}$. Moreover, $f_{i,j,k}$ represents the numerical value of $f$ at the regular numerical mesh points $( i h, j h, k h )$, and a discrete space ${\mathcal C}_{\rm per}$ is introduced as
	$$
{\mathcal C}_{\rm per} := \left\{ f = (f_{i,j,k} ) \,|\,  f_{i,j,k} = f_{i+\alpha N,j+\beta N, k+\gamma N}, \ \forall \, i,j,k,\alpha,\beta,\gamma\in \mathbb{Z} \right\}.
	$$
In turn, the discrete difference operators are evaluated at $( (i + \frac12) h , j h , k h)$, $( i h, ( j+\frac12) h, k h)$ and $( i h , j h , (k+\frac12)h )$, respectively:
	\begin{align*}
& 
D_x f_{i+\hf,j,k} := \frac{1}{h} (f_{i+1,j,k} - f_{i,j,k}  ), \quad
D_y f_{i,j+\hf,k} := \frac{1}{h} (f_{i,j+1,k} - f_{i,j,k}  ) ,
	\\
& 
D_z f_{i,j,k+\hf} := \frac{1}{h} (f_{i,j,k+1} - f_{i,j,k}  ) .
	\end{align*}
For a vector function $\vec{f} = ( f^x , f^y , f^z)^T$ with $f^x$, $f^y$, $f^z$ evaluated at $( (i+\frac12)h, j h , k h)$, $( i h, (j+\frac12) h, k h)$, $( i h , j h , ( k+\frac12)h )$, respectively, the corresponding average and difference operators are given by 
	\begin{align*}
& a_x f^x_{i, j, k} := \frac{1}{2} \big(f^x_{i+\hf, j, k} + f^x_{i-\hf, j, k} \big),	 \quad
 d_x f^x_{i, j, k} := \frac{1}{h}\big(f^x_{i+\hf, j, k} - f^x_{i-\hf, j, k} \big),
          \\
 & a_y f^y_{i,j, k} := \frac{1}{2} \big(f^y_{i,j+\hf, k} + f^y_{i,j-\hf, k} \big),	 \quad
d_y f^y_{i,j, k} := \frac{1}{h} \big(f^y_{i,j+\hf, k} - f^y_{i,j-\hf, k} \big),
	\\
& a_z f^z_{i,j,k} := \frac{1}{2} \big(f^z_{i, j,k+\hf} + f^z_{i, j, k-\hf} \big),   \quad
 d_z f^z_{i,j, k} := \frac{1}{h} \big(f^z_{i, j,k+\hf} - f^z_{i, j,k-\hf} \big) .
	\end{align*}
The average operators $A_x$, $A_y$ and $A_z$, evaluated at the staggered mesh points $(i+\hf, j,k)$, $(i,j+\hf,k)$ and $(i,j,k+\hf)$, respectively, could be similarly defined. Subsequently,  the discrete divergence turns out to be
\begin{equation*}
\nabla_h\cdot \big( \vec{f} \big)_{i,j,k} = ( d_x f^x )_{i,j,k}  + ( d_y f^y )_{i,j,k} + ( d_z f^z )_{i,j,k} .
\end{equation*}
In particular, if $\vec{f} = \nabla_h \phi = ( D_x \phi , D_y \phi, D_z \phi)^T$ for certain scalar grid function $\phi$, the corresponding divergence becomes the standard Laplacian operator $\Delta_h f = \nabla_h \cdot \nabla_h = D_x^2 + D_y^2 + D_z^2$. 

For two cell-centered, complex valued grid functions $f$ and $g$, the discrete $L^2$ inner product and the associated $\ell^2$ norm are defined as
\begin{equation*}
 \langle f , g \rangle := h^3 \sum_{i,j,k=1}^N \, ( f_{i,j,k} )^c g_{i,j,k} , \quad
    \| f \|_2 := (\left\langle f , f\right\rangle )^\frac12 .
\end{equation*}
For simplicity, we only focus on the real part of a discrete inner product throughout this article. In other words, two inner products have equivalent values if their real parts are equal. Meanwhile, the mean zero space is introduced as $\mathring{\mathcal C}_{\rm per}:=\big\{ f \in {\mathcal C}_{\rm per} \, \big| \, \overline{f} :=  \frac{1}{| \Omega|} \langle f , 1 \rangle = 0 \big\}$. Similarly, for two vector grid functions $\vec{f} = ( f^x , f^y , f^z )^T$ and $\vec{g} = ( g^x , g^y , g^z )^T$ with $f^x$ ($g^x$), $f^y$ ($g^y$), $f^z$ ($g^z$) evaluated at $( (i+\frac12)h, j h , (k+\frac12) h)$, $( i  h, (j+\frac12) h, k h)$, $( i h , j h , ( k+\frac12)h )$, respectively, the corresponding discrete inner product becomes
	\begin{align*}
	  &
   \langle \vec{f} , \vec{g} \rangle : = \eipx{f^x}{g^x}	+ \eipy{f^y}{g^y} + \eipz{f^z}{g^z}, 	
\\
  & \eipx{f^x}{g^x} := \langle a_x (f^x g^x) , 1 \rangle , \, \, \,
   \eipy{f^y}{g^y} := \langle a_y (f^y g^y) , 1 \rangle , \, \, \,
   \eipz{f^z}{g^z} := \langle a_z (f^z g^z) , 1 \rangle.
	\end{align*}
In addition to the $\ell^2$ norm, the discrete maximum norm and the $\ell^p$ ($1 \le p < + \infty$) norm are introduced as  $\nrm{f}_\infty := \max_{1\le i,j,k\le N}\left| f_{i,j,k}\right|$, $\| f \|_p = ( \langle | f |^p , 1 \rangle )^\frac{1}{p}$. Moreover, the discrete $H_h^1$ and $H_h^2$ norms are defined as 
\begin{equation*} 
\begin{aligned} 
  & 
\nrm{ \nabla_h f}_2^2 : = \langle \nabla_h f , \nabla_h f \rangle , \quad 
\nrm{f}_{H_h^1}^2 : =  \nrm{f}_2^2+ \nrm{ \nabla_h f}_2^2 , 
\\
  & 
\| f \|_{H_h^2}^2 := \| f \|_{H_h^1}^2 + \| D_x^2 f \|_2^2 + \| D_y^2 f \|_2^2 
  + \| D_z^2 f \|_2^2 + \| \Delta_h f \|_2^2 . 
\end{aligned} 
\end{equation*} 
For any $\psi, \phi \in {\mathcal C}_{\rm per}$ and any $\vec{f}$, the following summation-by-parts formulas are valid; see the related derivations in \cite{guan14a, guo16, wang11a, wise09}, etc: 
	\begin{equation}
\langle \psi , \nabla_h\cdot\vec{f} \rangle = - \langle \nabla_h \psi ,  \vec{f} \rangle, \, \, 
 \langle \psi, \Delta_h \phi \rangle = - \langle \nabla_h \psi ,  \nabla_h\phi \rangle , \, \, 
 \langle \psi, \Delta_h^2 \phi \rangle = \langle \Delta_h \psi ,  \Delta_h \phi \rangle . 	
  \label{summation by parts-1} 
 \end{equation}

To develop an efficient numerical solver for the normalized gradient flow system \eqref{grad_flow_cont}, we introduce an ETD operator, $\mathcal{G}_h = (- \frac12 \tau \Delta_h)^{-1}(I - {\rm e}^{\frac12 \tau\Delta_h})$. In more details,  for any $f \in {\mathcal C}_{\rm per}$ with the following discrete Fourier expansion:
\begin{equation}
  f_{i,j,k} = \sum_{\ell,m,n=-K}^K \hat{f}_{\ell, m,n} {\rm e}^{2 \pi \mathrm{i} ( \ell x_i + m y_j + n z_k)} ,  
  \label{Fourier-1}
\end{equation}
the associated ETD operator can be represented as
\begin{equation} 
  ( {\cal G}_h f )_{i,j,k} = \sum_{\ell, m,n=-K}^K  \frac{1
  - {\rm e}^{- \frac12 \tau \lambda_{\ell,m.n}} }{\frac12 \tau \lambda_{\ell,m,n}}
  \hat{f}_{\ell,m,n} {\rm e}^{2 \pi \mathrm{i} ( \ell x_i + m y_j + n z_k)} ,  
  \label{G_operator} 
\end{equation}
with $\lambda_{\ell,m,n} = \frac{4}{h^2} \big( \sin^2 (\ell \pi h) + \sin^2 (m \pi h)  + \sin^2 (n \pi h) \big)$. This operator serves as an $O (\tau + h^2)$ numerical approximation to the identity operator while maintaining crucial numerical stability properties. Moreover, since all the eigenvalues of ${\cal G}_h$ are non-negative, a natural definition of ${\cal G}^\frac12_h$ is given by
\begin{equation}
  ( {\cal G}_h^\frac12 f )_{i,j,k} = \sum_{\ell, m,n=-K}^K  \Big( \frac{1 - {\rm e}^{- \frac12 \tau \lambda_{\ell,m,n}} }{\frac12 \tau \lambda_{\ell,m,n}} \Big)^{\frac12}
  \hat{f}_{\ell,m,n} {\rm e}^{2 \pi \mathrm{i} ( \ell x_i + m y_j + n z_k)}  .
    \label{Fourier-3}
\end{equation}
It is clear that the operator ${\cal G}_h$ is commutative with any discrete differential operator, and the following summation by parts formula is valid:
\begin{equation}
   \langle f , {\cal G}_h g \rangle = \langle {\cal G}_h f ,  g \rangle 
   = \langle {\cal G}_h^\frac12  f , {\cal G}_h^\frac12 g \rangle , \quad \forall f , g \in {\cal C}_{\rm per} . 
     \label{Fourier-4}
\end{equation}

With the help of this ETD operator, we are able to construct the following explicit numerical scheme:
\begin{equation}\label{num_scheme}
\begin{aligned}
\frac{\tilde{\phi}^{n+1} - \phi^n}{\tau} &= \frac{1}{2}\mathcal{G}_h \Delta_h \phi^n - V(\mathbf{x})\phi^n - \beta|\phi^n|^2\phi^n + \lambda^n \phi^n - A (\tilde{\phi}^{n+1} - \phi^n ) , \\
\phi^{n+1} &= \frac{\tilde{\phi}^{n+1}}{\|\tilde{\phi}^{n+1}\|_2},
\end{aligned} 
\end{equation}
where the first equation in \eqref{num_scheme} represents the gradient flow evolution and the second equation corresponds to a standard renormalization. The Lagrange multiplier $\lambda^n$ is computed by
\begin{equation}\label{lambda_disc}
\lambda^n = \frac12 \| {\cal G}_h^\frac12 \nabla_h \phi^n \|_2^2 + \langle V(\mathbf{x})\phi^n + \beta|\phi^n|^2\phi^n, \phi^n \rangle.
\end{equation}

\section{Energy stability analysis} \label{sec: energy stability} 

The following equalities and inequalities will be useful in the energy stability analysis. 

\begin{lem}  \label{lem 1: preliminary}
The following estimates are valid: 
\begin{align} 
  & 
  \frac{1}{\tau} \| \tilde{\phi}^{n+1} - \phi^n \|_2^2 
  - \frac12 \langle \mathcal{G}_h \Delta_h \phi^n , \tilde{\phi}^{n+1} - \phi^n \rangle  
  \ge \frac14 ( \| {\cal G}_h^\frac12 \nabla_h \tilde{\phi}^{n+1} \|_2^2 
  - \| {\cal G}_h^\frac12 \nabla_h \phi^n \|_2^2 ) , 
  \label{lem 1-1-1} 
\\
  & 
  \langle \tilde{\phi}^{n+1} - \phi^n , \phi^n \rangle = 0 ,  \label{lem 1-1-2} 
\\
  & 
  \| \tilde{\phi}^{n+1} \|_2 \ge \| \phi^n \|_2 = 1 .  \label{lem 1-1-3} 
\end{align} 
\end{lem}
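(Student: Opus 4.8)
The plan is to establish the three claims of Lemma~\ref{lem 1: preliminary} more or less independently, since \eqref{lem 1-1-2} and \eqref{lem 1-1-3} are elementary consequences of the normalization step, and \eqref{lem 1-1-1} is the only one requiring the structure of the ETD operator.

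For \eqref{lem 1-1-2}, I would simply take the discrete $L^2$ inner product of the first equation of the scheme \eqref{num_scheme} against $\phi^n$. Actually, the cleaner route is to observe directly from the second line of \eqref{num_scheme} that $\phi^{n+1}$ is $\tilde\phi^{n+1}$ rescaled; but \eqref{lem 1-1-2} concerns $\tilde\phi^{n+1}-\phi^n$, so I will instead pair the first equation with $\phi^n$ and use the precise definition \eqref{lambda_disc} of $\lambda^n$. Indeed, $\langle \tfrac12 \mathcal{G}_h\Delta_h\phi^n,\phi^n\rangle = -\tfrac12\|\mathcal{G}_h^{1/2}\nabla_h\phi^n\|_2^2$ by the summation-by-parts formulas \eqref{summation by parts-1} and \eqref{Fourier-4}, while $\langle -V\phi^n-\beta|\phi^n|^2\phi^n,\phi^n\rangle = -\langle V\phi^n+\beta|\phi^n|^2\phi^n,\phi^n\rangle$, so the right-hand side paired with $\phi^n$ is exactly $\big(-\lambda^n + \lambda^n\big)\|\phi^n\|_2^2 - A\langle\tilde\phi^{n+1}-\phi^n,\phi^n\rangle = -A\langle\tilde\phi^{n+1}-\phi^n,\phi^n\rangle$. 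Hence $\tfrac1\tau\langle\tilde\phi^{n+1}-\phi^n,\phi^n\rangle = -A\langle\tilde\phi^{n+1}-\phi^n,\phi^n\rangle$, i.e. $(\tfrac1\tau+A)\langle\tilde\phi^{n+1}-\phi^n,\phi^n\rangle=0$, which forces \eqref{lem 1-1-2} since $\tfrac1\tau+A>0$. For \eqref{lem 1-1-3}, I would write $\|\tilde\phi^{n+1}\|_2^2 = \|\phi^n + (\tilde\phi^{n+1}-\phi^n)\|_2^2 = \|\phi^n\|_2^2 + 2\real\langle\tilde\phi^{n+1}-\phi^n,\phi^n\rangle + \|\tilde\phi^{n+1}-\phi^n\|_2^2 = \|\phi^n\|_2^2 + \|\tilde\phi^{n+1}-\phi^n\|_2^2 \ge \|\phi^n\|_2^2$ using the orthogonality just proved; the equality $\|\phi^n\|_2=1$ follows inductively from the renormalization in \eqref{num_scheme} (with the base case being the normalized initial datum).

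For \eqref{lem 1-1-1}, the strategy is to diagonalize everything in the discrete Fourier basis \eqref{Fourier-1}. Write $g_{\ell,m,n} := \frac{1-\mathrm{e}^{-\tau\lambda_{\ell,m,n}/2}}{\tau\lambda_{\ell,m,n}/2}$ for the (nonnegative) Fourier symbol of $\mathcal{G}_h$, and let $\hat\phi^n$, $\widehat{\tilde\phi^{n+1}}$ denote the Fourier coefficients. Then $\tfrac1\tau\|\tilde\phi^{n+1}-\phi^n\|_2^2 = \tfrac1\tau\sum |\widehat{\tilde\phi^{n+1}}-\hat\phi^n|^2$ (up to the $|\Omega|$ factor from Parseval), and $-\tfrac12\langle\mathcal{G}_h\Delta_h\phi^n,\tilde\phi^{n+1}-\phi^n\rangle = \tfrac12\sum g_{\ell,m,n}\lambda_{\ell,m,n}\,\real\big[(\hat\phi^n)^c(\widehat{\tilde\phi^{n+1}}-\hat\phi^n)\big]$, since $-\Delta_h$ has symbol $\lambda_{\ell,m,n}$. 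Meanwhile $\|\mathcal{G}_h^{1/2}\nabla_h\psi\|_2^2 = \sum g_{\ell,m,n}\lambda_{\ell,m,n}|\hat\psi|^2$. So, mode by mode, with $a := \hat\phi^n$, $b := \widehat{\tilde\phi^{n+1}}$, $\nu := \lambda_{\ell,m,n}\ge 0$, $g := g_{\ell,m,n} > 0$, the claimed inequality reduces to
\begin{equation*}
\frac1\tau |b-a|^2 + \frac12 g\nu\,\real\big[a^c(b-a)\big] \ge \frac14 g\nu\big(|b|^2 - |a|^2\big).
\end{equation*}
Using $|b|^2-|a|^2 = |b-a|^2 + 2\real[a^c(b-a)]$, the right side is $\tfrac14 g\nu|b-a|^2 + \tfrac12 g\nu\real[a^c(b-a)]$, so after cancelling the common cross term this is equivalent to $\tfrac1\tau|b-a|^2 \ge \tfrac14 g\nu|b-a|^2$, i.e. to $\tfrac1\tau \ge \tfrac14 g\nu$ whenever $b\ne a$.

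The main obstacle is therefore verifying the scalar bound $\tfrac14\tau\, g_{\ell,m,n}\lambda_{\ell,m,n} \le 1$, equivalently $\tfrac14\tau\lambda\cdot\frac{1-\mathrm{e}^{-\tau\lambda/2}}{\tau\lambda/2} = \tfrac12(1-\mathrm{e}^{-\tau\lambda/2}) \le 1$, which is immediate since $1-\mathrm{e}^{-\tau\lambda/2}\le 1$. So in fact the estimate holds with room to spare (the factor $\tfrac14$ could be improved to $\tfrac12$), and no constraint on $\tau$ or $h$ is needed — this is precisely where the ETD operator does its work. I would present \eqref{lem 1-1-1} by first reducing to a single Fourier mode via Parseval and \eqref{Fourier-4}, then performing the algebraic identity above, and finally invoking $1-\mathrm{e}^{-x}\le 1$ for $x\ge 0$; the only care needed is the bookkeeping of the common $|\Omega| = 1$ normalization factor and keeping track that only real parts matter, as stipulated in the text.
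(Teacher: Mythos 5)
Your proposal is correct and follows essentially the same route as the paper: your mode-by-mode Fourier computation is just a diagonalized version of the paper's argument, which combines the polarization identity for $-\frac12 \langle \mathcal{G}_h \Delta_h \phi^n , \tilde{\phi}^{n+1} - \phi^n \rangle$ with the symbol bound $\| \mathcal{G}_h^{\frac12} \nabla_h f \|_2^2 \le \frac{2}{\tau} \| f \|_2^2$ (the paper's \eqref{lem 1-4}), and your treatments of \eqref{lem 1-1-2} and \eqref{lem 1-1-3} (inner product with $\phi^n$ using \eqref{lambda_disc} and $\|\phi^n\|_2=1$, then the Pythagorean expansion) coincide with the paper's. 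One caution: your parenthetical claim that the factor $\frac14$ could be improved to $\frac12$ is valid only for the scalar symbol bound $\frac{\tau}{2}\, g_{\ell,m,n}\lambda_{\ell,m,n}\le 1$, not for the coefficient in \eqref{lem 1-1-1} itself, since the exact cancellation of the cross term $\real\,[a^c(b-a)]$ forces the coefficient $\frac14$ there.
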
 

\begin{proof} 
By the summation by parts formulas~\eqref{summation by parts-1} and \eqref{Fourier-4}, we see that 
\begin{equation} 
\begin{aligned} 
  & 
  - \frac12 \langle \mathcal{G}_h \Delta_h \phi^n , \tilde{\phi}^{n+1} - \phi^n \rangle 
  = \frac12 \langle {\cal G}_h^\frac12 \nabla_h \phi^n , 
  {\cal G}_h^\frac12 \nabla_h ( \tilde{\phi}^{n+1} - \phi^n ) \rangle   
\\
  =& 
   \frac14 (  \|  {\cal G}_h^\frac12 \nabla_h \tilde{\phi}^{n+1} \|_2^2 
   - \|  {\cal G}_h^\frac12 \nabla_h \phi^n \|_2^2  
  - \| {\cal G}_h^\frac12 \nabla_h ( \tilde{\phi}^{n+1} - \phi^n ) \|_2^2 ) .    
\end{aligned} 
  \label{lem 1-2} 
\end{equation} 
Meanwhile, the expansion formula~\eqref{Fourier-3} implies a similar identity for ${\cal G}_h^\frac12 D_x f$ 
\begin{equation}
  ( {\cal G}_h^\frac12 D_x f )_{i+\frac12,j,k} = \sum_{\ell, m,n=-K}^K \varphi_1^\frac12  (\frac12 \tau \lambda_{\ell,m,n} ) \cdot \frac{2}{h}  \sin (\ell \pi h) i  
  \hat{f}_{\ell,m,n} {\rm e}^{2 \pi \mathrm{i} ( \ell x_{i+\frac12} + m y_j+ n z_k)} , 
    \label{lem 1-3}
\end{equation}
with $\varphi_1 (x) = \frac{1 - {\rm e}^{-x}}{x}$ (for $x \ge 0$), and similar expansions would be available for ${\cal G}_h^\frac12 D_y f$ and ${\cal G}_h^\frac12 D_z f$. Subsequently, an application of Parseval equality reveals that 
\begin{equation} 
\begin{aligned} 
  & 
  \| {\cal G}_h^\frac12 \nabla_h  f \|_2^2 
\\
  =  & 
   \sum_{\ell, m,n=-K}^K
    \varphi_1  (\frac12 \tau \lambda_{\ell,m,n} ) \cdot \frac{4}{h^2}  ( \sin^2 (\ell \pi h) 
    + \sin^2 (m \pi h)  + \sin^2 (n \pi h) )   | \hat{f}_{\ell,m,n} |^2 
\\
   =  & 
   \sum_{\ell, m,n=-K}^K
   \frac{1 - {\rm e}^{- \frac12 \tau \lambda_{\ell,m,n}} }{\frac12 \tau \lambda_{\ell,m,n}}  
   \cdot \lambda_{\ell, m, n}   | \hat{f}_{\ell,m,n} |^2 
   \le \frac{2}{\tau} \sum_{\ell, m,n=-K}^K   | \hat{f}_{\ell,m,n} |^2 
   = \frac{2}{\tau} \| f \|_2^2 . 
\end{aligned} 
  \label{lem 1-4} 
\end{equation} 
As a result, a combination of~\eqref{lem 1-2} and \eqref{lem 1-4} leads to inequality~\eqref{lem 1-1-1}. 

Equality~\eqref{lem 1-1-2} comes from a rewritten form of the numerical algorithm~\eqref{num_scheme}
\begin{equation} 
  ( \frac{1}{\tau} + A ) ( \tilde{\phi}^{n+1} - \phi^n ) = \frac{1}{2}\mathcal{G}_h \Delta_h \phi^n 
  - V(\mathbf{x})\phi^n - \beta|\phi^n|^2\phi^n + \lambda^n \phi^n . 
  \label{lem 1-5} 
\end{equation} 
In turn, taking a discrete inner product with $\phi^n$ on both sides yields 
\begin{equation} 
\begin{aligned} 
  & 
  ( \frac{1}{\tau} + A ) \langle \tilde{\phi}^{n+1} - \phi^n , \phi^n \rangle 
\\
  = & 
    - \frac{1}{2} \| \mathcal{G}_h^\frac12 \nabla_h \phi^n \|_2^2 
  - \langle V(\mathbf{x})\phi^n + \beta|\phi^n|^2\phi^n , \phi^n \rangle 
  + \lambda^n \| \phi^n \|_2^2 = 0 , 
\end{aligned} 
  \label{lem 1-6} 
\end{equation} 
in which the summation by parts formulas~\eqref{summation by parts-1}, \eqref{Fourier-4}, representation formula~\eqref{lambda_disc} for $\lambda^n$, as well as the fact that $\| \phi^n \|_2=1$, have been used in the derivation. This in turn proves equality~\eqref{lem 1-1-2}. 

Based on the orthogonality equality~\eqref{lem 1-1-2}, we see that 
\begin{equation} 
   \| \tilde{\phi}^{n+1} \|_2^2 = \| \tilde{\phi}^{n+1} - \phi^n \|_2^2 + \| \phi^n \|_2^2   
    + 2 \langle \tilde{\phi}^{n+1} - \phi^n , \phi^n \rangle 
   = \| \tilde{\phi}^{n+1} - \phi^n \|_2^2 + \| \phi^n \|_2^2 . 
   \label{lem 1-7} 
\end{equation}  
Therefore, inequality~\eqref{lem 1-1-3} is proved. This finishes the proof of Lemma~\ref{lem 1: preliminary}. 
\end{proof} 

With the help of these preliminary estimates, a modified free energy stability becomes available.

\begin{prop}  \label{prop: energy stability} 
Assume that the artificial regularization parameter satisfies the following lower bound: 
\begin{equation} 
  A \ge \beta ( \frac12 \| \tilde{\phi}^{n+1} \|_\infty^2 + \| \phi^n \|_\infty^2 ) 
   + \frac12 \| V \|_\infty . 
  \label{condition-A-0} 
\end{equation} 
Then the numerical scheme~\eqref{num_scheme} preserves a modified free energy stability: 
\begin{equation} 
\begin{aligned} 
  & 
  E_h (\phi^{n+1} ) \le E_h (\phi^n) ,  \quad \mbox{with} 
\\
  & 
  E_h (\phi) := \frac12 \| {\cal G}_h^\frac12 \nabla_h \phi \|_2^2 + \langle V(\mathbf{x})\phi , \phi \rangle + \frac{\beta}{2} \langle |\phi|^2 \phi, \phi \rangle . 
\end{aligned} 
  \label{ener stab-0} 
\end{equation} 
\end{prop}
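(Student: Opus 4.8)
The plan is to prove the two inequalities $E_h(\tilde{\phi}^{n+1}) \le E_h(\phi^n)$ and $E_h(\phi^{n+1}) \le E_h(\tilde{\phi}^{n+1})$ separately; the first is the substantive step and the second handles the $L^2$ renormalization. For the first, I would take the discrete inner product of the rewritten scheme~\eqref{lem 1-5} with $\tilde{\phi}^{n+1}-\phi^n$ and keep only real parts. The Lagrange multiplier term drops out by the orthogonality~\eqref{lem 1-1-2}; the ETD diffusion term is rewritten exactly, as in the proof of Lemma~\ref{lem 1: preliminary} (see~\eqref{lem 1-2}), through the polarization identity for $\mathcal{G}_h^\frac12 \nabla_h$. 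Multiplying the resulting relation by $2$ produces an \emph{exact} formula for $\tfrac12\bigl(\|\mathcal{G}_h^\frac12\nabla_h\tilde{\phi}^{n+1}\|_2^2 - \|\mathcal{G}_h^\frac12\nabla_h\phi^n\|_2^2\bigr)$ — precisely the gradient part of $E_h(\tilde{\phi}^{n+1})-E_h(\phi^n)$ — in which the functionals $\langle V\phi^n,\tilde{\phi}^{n+1}-\phi^n\rangle$ and $\langle|\phi^n|^2\phi^n,\tilde{\phi}^{n+1}-\phi^n\rangle$ appear with coefficients $-2$ and $-2\beta$. This doubling is deliberate: it is what makes the cross terms cancel at the next step.

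Next I would form $E_h(\tilde{\phi}^{n+1}) - E_h(\phi^n)$ and expand the potential and quartic differences via $\langle V(|\tilde{\phi}^{n+1}|^2-|\phi^n|^2),1\rangle = \langle V|\tilde{\phi}^{n+1}-\phi^n|^2,1\rangle + 2\,\real\langle V\phi^n,\tilde{\phi}^{n+1}-\phi^n\rangle$ and $\tfrac{\beta}{2}\langle|\tilde{\phi}^{n+1}|^4-|\phi^n|^4,1\rangle = \tfrac{\beta}{2}\langle(|\tilde{\phi}^{n+1}|^2-|\phi^n|^2)(|\tilde{\phi}^{n+1}|^2+|\phi^n|^2),1\rangle$, then substitute the doubled identity from the previous paragraph. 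The terms $2\,\real\langle V\phi^n,\tilde{\phi}^{n+1}-\phi^n\rangle$ and $2\beta\,\real\langle|\phi^n|^2\phi^n,\tilde{\phi}^{n+1}-\phi^n\rangle$ cancel exactly, and one is left with
\begin{equation*}
\begin{aligned}
E_h(\tilde{\phi}^{n+1}) - E_h(\phi^n) = {} & \tfrac12\|\mathcal{G}_h^\frac12\nabla_h(\tilde{\phi}^{n+1}-\phi^n)\|_2^2 - 2\Bigl(\tfrac1\tau+A\Bigr)\|\tilde{\phi}^{n+1}-\phi^n\|_2^2 + \langle V|\tilde{\phi}^{n+1}-\phi^n|^2,1\rangle \\
& + \tfrac{\beta}{2}\langle(|\tilde{\phi}^{n+1}|^2-|\phi^n|^2)^2,1\rangle + \beta\langle|\phi^n|^2|\tilde{\phi}^{n+1}-\phi^n|^2,1\rangle .
\end{aligned}
\end{equation*}
The four positive terms are then bounded by $\|\tilde{\phi}^{n+1}-\phi^n\|_2^2$ times, in order, $\tfrac1\tau$ (using~\eqref{lem 1-4}), $\|V\|_\infty$, $\beta(\|\tilde{\phi}^{n+1}\|_\infty^2+\|\phi^n\|_\infty^2)$ — via the sharp pointwise bound $(|\tilde{\phi}^{n+1}|^2-|\phi^n|^2)^2 \le 2(|\tilde{\phi}^{n+1}|^2+|\phi^n|^2)\,|\tilde{\phi}^{n+1}-\phi^n|^2$, which comes from $\bigl||\tilde{\phi}^{n+1}|-|\phi^n|\bigr|\le|\tilde{\phi}^{n+1}-\phi^n|$ together with $(s+t)^2\le 2(s^2+t^2)$ — and $\beta\|\phi^n\|_\infty^2$. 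Collecting coefficients, the $\tfrac2\tau$ coming from $-2(\tfrac1\tau+A)$ absorbs the first $\tfrac1\tau$, and the hypothesis~\eqref{condition-A-0} is exactly what renders the net coefficient of $\|\tilde{\phi}^{n+1}-\phi^n\|_2^2$ nonpositive; in fact $E_h(\tilde{\phi}^{n+1}) - E_h(\phi^n) \le -\tfrac1\tau\|\tilde{\phi}^{n+1}-\phi^n\|_2^2 \le 0$.

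For the renormalization step, set $\rho := \|\tilde{\phi}^{n+1}\|_2$, so that $\rho \ge 1$ by~\eqref{lem 1-1-3} and $\phi^{n+1}=\tilde{\phi}^{n+1}/\rho$, whence $E_h(\phi^{n+1}) = \rho^{-2}\bigl(\tfrac12\|\mathcal{G}_h^\frac12\nabla_h\tilde{\phi}^{n+1}\|_2^2 + \langle V\tilde{\phi}^{n+1},\tilde{\phi}^{n+1}\rangle\bigr) + \tfrac{\beta}{2}\rho^{-4}\langle|\tilde{\phi}^{n+1}|^2\tilde{\phi}^{n+1},\tilde{\phi}^{n+1}\rangle$. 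Since $\beta>0$ the quartic contribution is nonnegative, and for the confinement potential $V\ge 0$ the bracketed quantity is nonnegative as well; multiplying these by $\rho^{-2}\le1$ and $\rho^{-4}\le1$ can therefore only decrease them, giving $E_h(\phi^{n+1})\le E_h(\tilde{\phi}^{n+1})$, and chaining the two bounds yields~\eqref{ener stab-0}. I expect the quartic term to be the main obstacle: it is where the exact cancellation of the cross terms must be verified, and where the sharp constant above is needed to recover precisely the factor $\beta(\tfrac12\|\tilde{\phi}^{n+1}\|_\infty^2+\|\phi^n\|_\infty^2)$ in~\eqref{condition-A-0}; a coarser Taylor-type estimate of $|\phi|^4$ would force a strictly larger lower bound on $A$.
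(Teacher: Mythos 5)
Your proposal is correct and follows essentially the same route as the paper: an inner product of the rewritten scheme with $\tilde{\phi}^{n+1}-\phi^n$, cancellation of the Lagrange-multiplier term via the orthogonality~\eqref{lem 1-1-2}, the same expansions of the potential and quartic parts (your exact identity plus the pointwise bound $(|\tilde{\phi}^{n+1}|^2-|\phi^n|^2)^2\le 2(|\tilde{\phi}^{n+1}|^2+|\phi^n|^2)|\tilde{\phi}^{n+1}-\phi^n|^2$ is precisely what underlies the paper's estimates~\eqref{ener stab-2}--\eqref{ener stab-3}), the bound $\|{\cal G}_h^{1/2}\nabla_h f\|_2^2\le \frac{2}{\tau}\|f\|_2^2$ from~\eqref{lem 1-4}, and the renormalization step using $\|\tilde{\phi}^{n+1}\|_2\ge 1$ together with the homogeneity of the energy terms. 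The only cosmetic differences are that you carry exact identities before bounding once at the end, and that you state explicitly the nonnegativity of $V$ needed in the renormalization step, which the paper uses implicitly.
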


\begin{proof} 
Taking a discrete inner product with~\eqref{num_scheme} by $\tilde{\phi}^{n+1} - \phi^n$ gives 
\begin{equation} 
\begin{aligned} 
  & 
  ( \frac{1}{\tau} + A ) \| \tilde{\phi}^{n+1} - \phi^n \|_2^2  
  - \Big\langle \frac{1}{2}\mathcal{G}_h \Delta_h \phi^n , \tilde{\phi}^{n+1} - \phi^n \Big\rangle 
  + \langle V(\mathbf{x}) \phi^n , \tilde{\phi}^{n+1} - \phi^n \rangle 
\\
  & 
  + \beta \langle |\phi^n|^2\phi^n , \tilde{\phi}^{n+1} - \phi^n \rangle  
  - \lambda^n \langle \tilde{\phi}^{n+1} - \phi^n, \phi^n \rangle = 0 . 
\end{aligned} 
  \label{ener stab-1} 
\end{equation} 
The first two terms could be bounded with the help of the preliminary inequality~\eqref{lem 1-1-1}. The third term could be handled by a straightforward triangular equality: 
\begin{equation} 
\begin{aligned} 
  \langle V(\mathbf{x}) \phi^n , \tilde{\phi}^{n+1} - \phi^n \rangle  
  = & \frac12 \Big( \langle V({\bf x}) , | \tilde{\phi}^{n+1} |^2 \rangle 
  - \langle V({\bf x}) , | \tilde{\phi}^n |^2 \rangle  
  - \langle V({\bf x}) , | \tilde{\phi}^{n+1} - \phi^n |^2 \rangle \Big). 
\end{aligned} 
    \label{ener stab-2} 
\end{equation} 
Moreover, the nonlinear energy estimate turns out to be classical: 
\begin{equation} 
\begin{aligned} 
  \langle |\phi^n|^2\phi^n , \tilde{\phi}^{n+1} - \phi^n \rangle  
  \ge & \frac14 (  \langle | \tilde{\phi}^{n+1} |^4 , 1 \rangle - \langle | \phi^n |^4 , 1 \rangle  ) 
\\
   & 
    -  \Big\langle  \frac12 | \tilde{\phi}^{n+1} | ^2  + | \phi^n |^2 , 
    | \tilde{\phi}^{n+1} - \phi^n |^2  \Big\rangle . 
 \end{aligned} 
    \label{ener stab-3} 
\end{equation} 
Meanwhile, the last term on the left hand side of~\eqref{ener stab-1} disappears, which comes from the $L^2$ orthogonal equality~\eqref{lem 1-1-2}. In turn, a substitution of~\eqref{lem 1-1-1}, \eqref{lem 1-1-2}, \eqref{ener stab-2} and \eqref{ener stab-3} into \eqref{ener stab-1} results in 
\begin{equation} 
\begin{aligned} 
  & 
  \frac12 ( E_h  (\tilde{\phi}^{n+1}) - E_h (\phi^n) ) 
  + A \| \tilde{\phi}^{n+1} - \phi^n \|_2^2  
\\
  & 
  -  \Big\langle  \beta ( \frac12 | \tilde{\phi}^{n+1} | ^2  + | \phi^n |^2 ) + \frac12 V ({\bf x}) , 
    | \tilde{\phi}^{n+1} - \phi^n |^2  \Big\rangle \le 0 . 
\end{aligned} 
    \label{ener stab-4} 
\end{equation} 
Subsequently, under the lower bound constraint~\eqref{condition-A-0}, we see that 
\begin{equation} 
\begin{aligned} 
  & 
   A \| \tilde{\phi}^{n+1} - \phi^n \|_2^2   
  -  \Big\langle  \beta ( \frac12 | \tilde{\phi}^{n+1} | ^2  + | \phi^n |^2 ) + \frac12 V ({\bf x}) , 
    | \tilde{\phi}^{n+1} - \phi^n |^2  \Big\rangle \ge 0 , 
\\
  & 
  \mbox{so that} \quad E_h  (\tilde{\phi}^{n+1}) \le E_h (\phi^n) . 
\end{aligned} 
    \label{ener stab-5} 
\end{equation} 
On the other hand, by the fact that $\| \tilde{\phi}^{n+1} \|_2 \ge \| \phi^n \|_2 = 1$ (as given by~\eqref{lem 1-1-3}), combined with the normalization formula $\phi^{n+1} = \frac{\tilde{\phi}^{n+1}}{\|\tilde{\phi}^{n+1}\|_2}$, it is clear that 
\begin{equation} 
  E_h (\phi^{n+1}) = E_h \Big(  \frac{\tilde{\phi}^{n+1}}{\|\tilde{\phi}^{n+1}\|_2} \Big) \le E_h  (\tilde{\phi}^{n+1}) \le E_h (\phi^n) , \label{ener stab-6} 
\end{equation} 
in which the second step comes from the fact that all the energy expansion terms of $E_h$ are in either quadratic or fourth order polynomial ones. The proof of Proposition~\ref{prop: energy stability} is finished. 
\end{proof} 

Of course, the maximum bounds for the numerical solution, including both $\phi^n$ and $\tilde{\phi}^{n+1}$, are needed to theoretically justify the lower bound of $A$ in~\eqref{condition-A-0}. On the other hand, a global-in-time $\ell^\infty$ bound for $\phi^n$ and $\tilde{\phi}^{n+1}$ is not theoretically available. Instead, we have to perform a local-in-time convergence analysis and error estimate, in the $\ell^\infty (0, T; H_h^2) \cap \ell^2 (0, T; H_h^3)$ space. In turn, an application of discrete Sobolev inequality enables us to derive the desired maximum norm bound for the numerical solution, so that the modified energy stability analysis forms a closed argument. 

As a result of the modified energy estimate~\eqref{ener stab-0}, the following uniform-in-time $H_h^1$ and $\ell^4$ bounds of the numerical solution becomes valid, which will be useful in the later convergence analysis: 
\begin{equation} 
\begin{aligned} 
  & 
  E_h (\phi^k) \le E_h (\phi^0) := C_0 ,  \quad \mbox{so that} 
\\
  & 
  \| {\cal G}_h^\frac12 \nabla_h \phi^k \|_2 \le ( 2 C_0 )^\frac12 , \, \, \, 
  \| \phi^k \|_4 \le ( 2 \beta^{-1} C_0 )^\frac14 ,  \quad \forall k \ge 0. 
\end{aligned} 
  \label{ener stab-H1-1} 
\end{equation} 

\section{Convergence analysis and error estimate}  \label{sec: convergence analysis} 

Denote $\phi_e$ as the exact solution for the normalized gradient flow equation~\eqref{grad_flow_cont}. For the convenience of the normalization error estimate, we introduce the Fourier projection of the exact solution into $\mathcal{B}^{K}$, the space of trigonometric polynomials of degree to $K$ (with $N=2 K+1)$:  $\Phi_{N}(\cdot, t):=\mathcal{P}_{N} \phi_e (\cdot, t)$. Of course, a standard projection estimate is available: 
\begin{equation} \label{eqn: projection estimate}
        \| \Phi_{N} - \phi_e \|_{L^{\infty} (0, T ; H^{k} )}    
           \le Ch^{m-k} \| \phi_e \|_{L^{\infty} (0, T ; H^m )}  , 
\end{equation} 
for any $m \in \mathbb{N}$ with $0 \leq k \leq m$, $\phi_e \in L^{\infty} (0, T ; H_{\mathrm{per}}^m (\Omega) )$. In particular, a spectral accuracy of the $L^2$ norm of $\Phi_N$ is observed: 
\begin{equation} 
  \| \Phi_N (\cdot , t) \| = \| \phi_e ( \cdot , t) \|  - O (h^m) = 1 - O (h^m) . 
  \label{exact-2} 
\end{equation} 
Subsequently, we introduce an $L^2$ renormalized exact solution as 
\begin{equation} 
  \Phi (\cdot , t) := \frac{\Phi_N (\cdot , t)}{\| \Phi_N (\cdot , t) \|} . 
  \label{exact-3} 
\end{equation} 
Of course, such a renormalization leads to a constant $L^2$ norm for $\Phi$, at both the continuous and discrete levels: 
\begin{equation} 
  \| \Phi (\cdot, t) \|_2 = \| \Phi (\cdot, t) \| = \frac{\| \Phi_N (\cdot , t) \|}{\| \Phi_N (\cdot , t) \|} \equiv 1 ,  
  \quad \forall t \ge 0 ,  \label{exact-4} 
\end{equation} 
in which the first step is based on the fact that $\Phi \in \mathcal{B}^{K}$, so that its continuous and discrete $L^2$ norms are equivalent. Meanwhile, by the $L^2$ norm spectral accuracy~\eqref{exact-2} of $\Phi_N$, we see that 
\begin{equation} \label{exact-5}
        \| \Phi - \phi_e \|_{L^{\infty} (0, T ; H^{k} )}    
           \le Ch^{m-k} \| \phi_e \|_{L^{\infty} (0, T ; H^m )}  .  
\end{equation} 
To simplify the notation in the later analysis, we denote $\Phi^{m}= \Phi \left(\cdot, t_{m}\right)$ (with $t_{m}=m \cdot \tau$). In turn, the error grid function is defined as 
\begin{equation} 
  e^m := \Phi^m - \phi^m ,  \quad 
  \tilde{e}^m := \Phi^m - \tilde{\phi}^m ,  \quad \forall m \ge 1. 
  \label{error function-1} 
\end{equation} 
  The following theorem is the main result of this section.

\begin{thm}
	\label{thm:convergence}
Given initial data $\phi_e (\, \cdot \, ,t=0) \in C^6_{\rm per}(\Omega)$, suppose the exact solution for the normalized gradient flow~\eqref{grad_flow_cont} is of regularity class $\mathcal{R}$. Provided that $\tau$ and $h$ are sufficiently small, 
we have
	\begin{equation}
\| e^m \|_2 + \| \Delta_h e^m \|_2 
 + \Bigl( \tau  \sum_{k=0}^{m-1} \| {\cal G}_h^\frac12 \nabla_h \Delta_h e^k \|_2^2  ) \Bigr)^{1/2}  
   \le C ( \tau + h^2) , 
	\label{convergence-0}
	\end{equation}
for any $m \in \mathbb{N}$ (with $t_m=m\tau \le T$), and $C>0$ is independent of $\tau$ and $h$.
	\end{thm}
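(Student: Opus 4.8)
\medskip
The plan is to argue by induction on the time step $m$, using the target bound~\eqref{convergence-0} at level $m$ simultaneously as the statement to be propagated and as the hypothesis that closes the energy analysis. Once~\eqref{convergence-0} holds up to step $m$, the three–dimensional discrete Sobolev embedding $\nrm{\cdot}_\infty \le C\nrm{\cdot}_{H_h^2}$ applied to $e^m=\Phi^m-\phi^m$, together with the regularity of $\phi_e$ (which bounds $\nrm{\Phi^m}_\infty$), yields $\nrm{\phi^m}_\infty\le C^\ast$; the same argument applied to the error equation at level $m+1$ gives $\nrm{\tilde\phi^{m+1}}_\infty\le C^\ast$. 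Fixing the stabilization constant $A$ in advance, large enough in terms of $\nrm{\phi_e}_{L^\infty(0,T;L^\infty)}$, $\beta$ and $\nrm{V}_\infty$ with a margin absorbing the $O(\tau+h^2)$ correction, the lower bound~\eqref{condition-A-0} then holds at step $m$, so Proposition~\ref{prop: energy stability} applies and supplies the uniform $H_h^1$ and $\ell^4$ bounds~\eqref{ener stab-H1-1}; these are exactly the ingredients needed to advance the error estimate to step $m+1$.

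\medskip
The first step is the consistency analysis. Inserting the renormalized exact profile $\Phi$ of~\eqref{exact-3} into~\eqref{num_scheme}, one defines $\tilde\Phi^{n+1}$ by the first relation (which it then satisfies exactly, with $\lambda_\Phi^n$ the value of~\eqref{lambda_disc} at $\Phi^n$) and writes $\Phi^{n+1}=\tilde\Phi^{n+1}/\nrm{\tilde\Phi^{n+1}}_2+\rho^{n+1}$. Since $\frac12{\cal G}_h\Delta_h=\frac1\tau({\rm e}^{\frac12\tau\Delta_h}-I)$ integrates the centered–difference diffusion exactly over $[t_n,t_{n+1}]$ while the terms $V\phi$, $\beta|\phi|^2\phi$, $\lambda\phi$ are frozen at $t_n$, a Taylor expansion in time, the symbol estimate $|\varphi_1(\frac12\tau\lambda)-1|=O(\tau\lambda)$, the $O(h^2)$ accuracy of $\Delta_h$, the projection bound~\eqref{exact-5}, and $\nrm{\Phi_N}=1-O(h^m)$ together give $\nrm{\rho^{n+1}}_2+\nrm{\Delta_h\rho^{n+1}}_2=O(\tau^2+\tau h^2+h^m)$. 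The gain of an extra power of $\tau$ is crucial and comes from $\nrm{\tilde\Phi^{n+1}-\Phi^n}_2=O(\tau)$ (as $\Delta_h\Phi^n$ is bounded and $\nrm{{\cal G}_h}\le1$) together with the exact orthogonality $\langle\tilde\Phi^{n+1}-\Phi^n,\Phi^n\rangle=0$ — the discrete counterpart of $\langle\partial_t\phi_e,\phi_e\rangle=0$, valid because $\nrm{\Phi^n}_2\equiv1$ by construction and $\lambda_\Phi^n$ is the discrete Rayleigh–type quotient~\eqref{lambda_disc}; by the Pythagorean identity this forces $\nrm{\tilde\Phi^{n+1}}_2-1=O(\tau^2)$, so the renormalization step is an $O(\tau^2)$–perturbation of the identity in both the $\ell^2$ and $H_h^2$ norms.

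\medskip
Subtracting~\eqref{num_scheme} from its exact–solution counterpart gives, for $e^n=\Phi^n-\phi^n$ and $\tilde e^{n+1}=\tilde\Phi^{n+1}-\tilde\phi^{n+1}$,
\[
 \Bigl(\frac1\tau+A\Bigr)(\tilde e^{n+1}-e^n)=\frac12{\cal G}_h\Delta_h e^n-Ve^n-\beta\bigl(|\Phi^n|^2\Phi^n-|\phi^n|^2\phi^n\bigr)+\bigl(\lambda_\Phi^n\Phi^n-\lambda^n\phi^n\bigr),
\]
with no residual, while the normalization relates $e^{n+1}$ to $\tilde e^{n+1}$ up to an $O(\tau^2)$ relative perturbation of bounded quantities plus $\rho^{n+1}$. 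I would then test this error equation with a suitable linear combination of $\tilde e^{n+1}$ and $\Delta_h^2\tilde e^{n+1}$. The diffusion term, treated via~\eqref{summation by parts-1}, \eqref{Fourier-4}, and the Parseval bound $\nrm{{\cal G}_h^{1/2}\nabla_h f}_2^2\le\frac2\tau\nrm{f}_2^2$ exactly as in the proof of Lemma~\ref{lem 1: preliminary} but one derivative higher, produces the monotone term $\frac14\nrm{{\cal G}_h^{1/2}\nabla_h\Delta_h e^n}_2^2$ after the cross term is absorbed into $\frac1{2\tau}\nrm{\Delta_h(\tilde e^{n+1}-e^n)}_2^2$; summing in $n$ and multiplying by $\tau$ recovers the $\tau\sum_k\nrm{{\cal G}_h^{1/2}\nabla_h\Delta_h e^k}_2^2$ term of~\eqref{convergence-0}. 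The potential and Lagrange–multiplier terms are harmless: after one summation by parts $\nrm{\Delta_h(Ve^n)}_2\le C\nrm{e^n}_{H_h^2}$, while $\lambda_\Phi^n$ is a bounded scalar (so $\nrm{\Delta_h(\lambda_\Phi^n e^n)}_2=|\lambda_\Phi^n|\nrm{\Delta_h e^n}_2$) and $|\lambda_\Phi^n-\lambda^n|\le C\nrm{e^n}_{H_h^1}$, the difference of the quadratic/quartic quotients~\eqref{lambda_disc} being linear in $e^n$ with coefficients controlled by~\eqref{ener stab-H1-1}. The cubic term demands the most care: writing $|\Phi^n|^2\Phi^n-|\phi^n|^2\phi^n=P(\Phi^n,\overline{\Phi^n},\phi^n,\overline{\phi^n})e^n$ with $P$ quadratic, one summation by parts, the Leibniz rule, Hölder's inequality, and the discrete embeddings $\nrm{g}_\infty,\nrm{g}_6\le C\nrm{g}_{H_h^2}$ bound $\nrm{\Delta_h(|\Phi^n|^2\Phi^n-|\phi^n|^2\phi^n)}_2$ by $C\nrm{e^n}_{H_h^2}$, where $C$ depends only on $C^\ast$ and on $\nrm{\Delta_h\phi^n}_2$, $\nrm{\nabla_h\phi^n}_6$, both $O(1)$ by the induction hypothesis and~\eqref{ener stab-H1-1}. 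Collecting everything, applying Young's inequality with a small parameter (admissible because the good coefficients carry a factor $\tau^{-1}$), passing from $\nrm{\Delta_h\tilde e^{n+1}}_2$ to $\nrm{\Delta_h e^{n+1}}_2$ via the benign normalization relation, and invoking the discrete Gronwall lemma closes the induction and yields~\eqref{convergence-0}.

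\medskip
The principal obstacle is the circularity flagged after Proposition~\ref{prop: energy stability}: the modified energy stability requires $\ell^\infty$ control of $\phi^n$ and $\tilde\phi^{n+1}$ that is not available a priori, while the convergence proof needs the $H_h^1$/$\ell^4$ bounds that issue only from that stability; both must therefore be propagated together within one induction in the comparatively strong $\ell^\infty(0,T;H_h^2)\cap\ell^2(0,T;H_h^3)$ topology. Inside this scheme, the genuinely technical step is the $H_h^2$ estimate of the cubic nonlinearity, which forces the full three–dimensional discrete Sobolev toolkit and careful bookkeeping of which factor carries the second difference; a secondary subtlety is that the ETD cross term is only marginally absorbed ($\frac12$ against $\frac14$), so the Cauchy--Schwarz weights at that step must be chosen with some care.
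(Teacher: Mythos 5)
Your proposal is sound and, at the level of key ideas, coincides with the paper's proof: error analysis against the renormalized Fourier projection $\Phi$, an $\ell^2$ plus $H_h^2$ energy estimate of the error equation in which the ETD diffusion term is handled by the Parseval bound \eqref{prop 1-1-1} (exactly as in Lemma~\ref{lem 1: preliminary}, one derivative higher), discrete Sobolev and product estimates for the cubic, potential and Lagrange-multiplier error terms (Proposition~\ref{prop: convergence-prelim}), the decisive observation that the orthogonality \eqref{lem 1-1-2} forces $\|\tilde\phi^{n+1}\|_2-1=O(\tau^2)$ so that the renormalization stage is a benign perturbation (Proposition~\ref{prop: renormalization}), and an induction that recovers the $\ell^\infty$ bounds needed for the stability and nonlinear estimates from the $H_h^2$ error bound via the discrete Sobolev embedding. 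The two places where you deviate are bookkeeping choices, and both are workable: (i) you lodge the entire truncation error in the normalization step by defining an exact intermediate profile $\tilde\Phi^{n+1}$, whereas the paper puts the residual $\zeta_0^n$ in the evolution step \eqref{consistency-1} and sets $\tilde e^{n+1}=\Phi^{n+1}-\tilde\phi^{n+1}$; (ii) you treat the passage from $\tilde e^{n+1}$ to $e^{n+1}$ as an additive $O(\tau^2)$ perturbation, while the paper proves the sharper Law-of-Cosines inequality \eqref{prop 2-1-1} exploiting $\|\Phi^{n+1}\|_2=\|\phi^{n+1}\|_2=1$; your weaker additive version costs only an extra $(1+\tau)$ factor and $O(\tau^3)$ terms per step, which the discrete Gronwall argument absorbs. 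The one point you should make explicit in a full write-up is the ordering already present in the paper: a rough $H_h^2$ bound at the intermediate stage (the analogue of \eqref{a priori-5}) must be obtained before the renormalization comparison, because both the $O(\tau^2)$ estimate \eqref{prop 2-1-0} and the $H_h^2$ perturbation bound require $\|\Delta_h\phi^n\|_2$, $\|\phi^n\|_\infty$ and $\|\Delta_h\tilde\phi^{n+1}\|_2$ to be controlled by the induction hypothesis at the previous step rather than by the refined estimate being proved.
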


A careful Taylor expansion for $\Phi$ (in both time and space), combined with the projection estimates, gives the local truncation error estimate 
\begin{equation}\label{consistency-1}
\begin{aligned}  
\frac{\Phi^{n+1} - \Phi^n}{\tau} = & \frac{1}{2}\mathcal{G}_h \Delta_h \Phi^n - V(\mathbf{x}) \Phi^n - \beta|\Phi^n|^2 \Phi^n + \Lambda^n \Phi^n \\ 
 &  
  - A (\Phi^{n+1} - \Phi^n ) + \zeta_0^n , \, \, \, 
  \| \zeta_0^n \|_2 + \| \nabla_h \zeta_0^n \|_2 \le C (\tau + h^2 ) , \\ 
 \Lambda^n = & \frac12 \| {\cal G}_h^\frac12 \nabla_h \Phi^n \|_2^2 + \langle V(\mathbf{x})\Phi^n + \beta|\Phi^n|^2 \Phi^n, \Phi^n \rangle ,  \\ 
 \| \Phi^{n+1} \|_2 = & \| \Phi^n \|_2 = 1 . 
\end{aligned} 
\end{equation} 
Subsequently, a subtraction of the numerical system~\eqref{num_scheme} from the consistency estimate \eqref{consistency-1} yields 
\begin{equation} 
\begin{aligned}  
\frac{\tilde{e}^{n+1} - e^n}{\tau} = & \frac{1}{2}\mathcal{G}_h \Delta_h e^n - V(\mathbf{x}) e^n 
 - \beta {\cal NLE}^n + \lambda^n e^n + e_\lambda^n \Phi^n \\ 
 &  
  - A (e^{n+1} - e^n ) + \zeta_0^n ,  \quad \mbox{with} \, \, \, 
  \| \zeta_0^n \|_2 , \, \| \Delta_h \zeta_0^n \|_2 \le C (\tau + h^2) ,  \\ 
  {\cal NLE}^n := & |\phi^n|^2 e^n + ( (\Phi^n)^c e^n + \phi^n ( e^n )^c ) \Phi^n , 
\\
 e_\lambda^n = & \frac12 \langle {\cal G}_h^\frac12 \nabla_h ( \Phi^n + \phi^n ) , 
  {\cal G}_h^\frac12 \nabla_h e^n \rangle  
  + \langle V(\mathbf{x}) ( \Phi^n + \phi^n ) , e^n \rangle 
\\
  & 
  + \beta ( \langle |\phi^n|^2 \phi^n, e^n \rangle 
 + \langle {\cal NLE}^n , \Phi^n \rangle ) ,  \\ 
 e^{n+1}  = & \tilde{e}^{n+1} + ( \| \tilde{\phi}^{n+1} \|_2 - 1) \phi^{n+1}  . 
\end{aligned} 
  \label{error equation-1}
\end{equation} 

A discrete $\ell^\infty$ and $H_h^2$ bound is assumed for the constructed approximate solution $\Phi$, since its functional bound only depends on the exact solution: 
\begin{equation} 
  \| \Phi^k \|_\infty , \, \,  \| \Phi^k \|_2 , \, \,  \| \Delta_h \Phi^k \|_2 \le C^* .  
  \label{exact-inf-1} 
\end{equation} 
Of course, in terms of $V ({\bf x})$ in the potential energy, a natural $\ell^\infty \cap H_h^2$ bound, $\| V \|_\infty\le C^*$, $\| V \|_2$, $\| \Delta_h V \|_2 \le C^*$, could be assumed. In addition, we make the following a-priori assumption for the numerical error function at the previous time step:
\begin{equation}\label{a priori-1}
\| e^n \|_2  , \,  \| \Delta_h e^n \|_2 \le \tau^{\frac{7}{8}}+h^{\frac{7}{4}} . 
\end{equation}
Such an a-priori assumption is valid at $n=0$, and it will be recovered by the optimal rate convergence analysis at the next time step, as will be proved later. In turn, a discrete $\ell^\infty$ bound for the numerical error function is available at the previous time step, based on the discrete Sobolev inequalities in~\eqref{prop 1-1-2}:  
\begin{equation}\label{a priori-2}
 \| e^n \|_\infty \leq \breve{C}_0 ( \| e^n \|_2 + \| \Delta_h e^n \|_2 )  
  \le C(\tau^\frac78+h^{\frac74})  \le \frac12 . 
\end{equation} 
Subsequently, combined with the regularity assumption~\eqref{exact-inf-1}, an $\ell^\infty \cap H_h^2$ bound for the numerical solution could be derived at the previous time step: 
\begin{equation}\label{a priori-3}
\begin{aligned}
&\|  \phi^n \|_\infty \le \| \Phi^{k}\|_\infty + \| e^n \|_\infty \leq C^*+\frac12:= \tilde{C}_1 , 
  \\ 
  & 
  \|  \Delta_h \phi^n \|_2 \le \| \Delta_h \Phi^n \|_2 + \| \Delta_h e^n \|_2 
  \le C^* + \tau^\frac78 + h^\frac74 \le C^*+\frac12 = \tilde{C}_1 . 
\end{aligned}
\end{equation} 

The following preliminary estimates will be needed in the later convergence analysis; the detailed proof will be provided in Appendix~\ref{appendix: prop 2}. 

\begin{prop} \label{prop: convergence-prelim} 
We have 
\begin{align} 
  & 
  \| f \|_2^2 \ge \frac{\tau}{2} \| {\cal G}_h^\frac12 \nabla_h f \|_2^2 , \quad 
  \| \Delta_h f \|_2^2 \ge \frac{\tau}{2} \| {\cal G}_h^\frac12 \nabla_h \Delta_h f \|_2^2 , 
   \quad \forall f \in {\mathcal C}_{\rm per} ,  \label{prop 1-1-1} 
\\
  & 
     \| \nabla_h f \|_4 \le \breve{C}_0 \| \Delta_h f \|_2 , \quad    
   \| f \|_\infty , \, \| f \|_{H_h^2} \le \breve{C}_0 ( \| f \|_2 + \| \Delta_h f \|_2 ) ,  
   \quad  \forall f \in {\mathcal C}_{\rm per} ,  
   \label{prop 1-1-2}  
\\
  & 
   \| \Delta_h ( f g ) \|_2 \le \breve{C}_1 ( \| f \|_2 + \| \Delta_h f \|_2 ) ( \| g \|_2 + \| \Delta_h g \|_2 ) ,  
   \quad \forall f , g \in {\mathcal C}_{\rm per} ,  
   \label{prop 1-1-2-2}  
\\
  & 
  \| {\cal NLE}^n \|_2 \le \tilde{C}_2 \| e^n \|_2 ,  \quad 
  \| \Delta_h {\cal NLE}^n \|_2 \le \tilde{C}_3 ( \| e^n \|_2 + \| \Delta_h e^n \|_2 ) , 
  \label{prop 1-1-3}  
\\
  & 
  0 \le \lambda^n \le \tilde{C}_4 ,  \quad 
  | e_\lambda^n | \le \tilde{C}_5 ( \| e^n \|_2 + \| {\cal G}_h^\frac12 \nabla_h e^n \|_2 ) , 
  \label{prop 1-1-4}  
\end{align} 
in which $\breve{C}_0$ and $\breve{C}_1$ depend only on $\Omega$, and $\tilde{C}_j$, $2 \le j \le 5$, depends only on the exact solution. 
\end{prop}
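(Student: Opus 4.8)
The statement collects the discrete functional inequalities needed for Theorem~\ref{thm:convergence}, so the plan is to establish \eqref{prop 1-1-1}--\eqref{prop 1-1-4} one group at a time, keeping careful track that $\breve{C}_0,\breve{C}_1$ come from $\Omega$ alone while $\tilde{C}_2,\dots,\tilde{C}_5$ come from the exact solution through the already-available bounds \eqref{exact-inf-1} and \eqref{a priori-3}. For \eqref{prop 1-1-1}, the Fourier computation behind \eqref{lem 1-4} gives $\|{\cal G}_h^\frac12\nabla_h g\|_2^2=\sum_{\ell,m,n}\varphi_1(\frac12\tau\lambda_{\ell,m,n})\,\lambda_{\ell,m,n}\,|\hat{g}_{\ell,m,n}|^2$ for every $g\in{\cal C}_{\rm per}$, and since $\varphi_1(x)\,x=1-{\rm e}^{-x}\le 1$ this is $\le\frac2\tau\|g\|_2^2$; taking $g=f$ gives the first inequality and $g=\Delta_h f$ (allowed because $\Delta_h$ commutes with ${\cal G}_h$ and $\nabla_h$) the second. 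The same identity also records the elementary bound $\|{\cal G}_h^\frac12\nabla_h g\|_2\le\|\nabla_h g\|_2$, which I will use repeatedly below.

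Next, \eqref{prop 1-1-2} consists of the discrete versions of the three-dimensional Sobolev embeddings $H^2\hookrightarrow W^{1,4}\cap L^\infty$ together with the elliptic equivalence $\|f\|_{H_h^2}\sim\|f\|_2+\|\Delta_h f\|_2$. The equivalence is a direct comparison of the Fourier symbols of $D_x^2,D_y^2,D_z^2$ with that of $\Delta_h$; the embeddings are standard on the uniform periodic mesh, and I would invoke them from \cite{guan14a, guo16, wang11a, wise09}; and the sharper $\|\nabla_h f\|_4\le\breve{C}_0\|\Delta_h f\|_2$ (with no $\|f\|_2$ on the right) follows after restriction to the mean-zero space $\mathring{\mathcal C}_{\rm per}$ and a discrete Poincar\'e inequality. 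The product estimate \eqref{prop 1-1-2-2} then comes from the discrete Leibniz identity for $\Delta_h(fg)$: after collecting the $O(h)$ correction terms intrinsic to centered differencing of a product, $\Delta_h(fg)$ is a combination of $(\Delta_h f)g$, $(\nabla_h f)\cdot(\nabla_h g)$, and $f(\Delta_h g)$ up to the averaging operators $a_x,A_x$, so H\"older's inequality and \eqref{prop 1-1-2} give $\|\Delta_h(fg)\|_2\le\|\Delta_h f\|_2\|g\|_\infty+\breve{C}_0^2\|\Delta_h f\|_2\|\Delta_h g\|_2+\|f\|_\infty\|\Delta_h g\|_2$, which is the claimed bound.

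For \eqref{prop 1-1-3} I would expand ${\cal NLE}^n=|\phi^n|^2e^n+\big((\Phi^n)^ce^n+\phi^n(e^n)^c\big)\Phi^n$ and estimate termwise. The $\ell^2$ bound is a direct H\"older estimate using $\|\phi^n\|_\infty\le\tilde{C}_1$ from \eqref{a priori-3} and $\|\Phi^n\|_\infty\le C^*$ from \eqref{exact-inf-1}, so $\tilde{C}_2$ depends only on the exact solution; for the $H_h^2$ bound, apply $\Delta_h$ and use \eqref{prop 1-1-2-2} iteratively, each summand being a product of at most three factors drawn from $\{\phi^n,\Phi^n,e^n\}$, with the ``frozen'' factors controlled in $\|\cdot\|_2+\|\Delta_h\cdot\|_2$ by $\tilde{C}_1$ and $C^*$ via \eqref{a priori-3} and \eqref{exact-inf-1}, leaving a factor $\|e^n\|_2+\|\Delta_h e^n\|_2$. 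For \eqref{prop 1-1-4}, the representation \eqref{lambda_disc} reads $\lambda^n=\frac12\|{\cal G}_h^\frac12\nabla_h\phi^n\|_2^2+\langle V,|\phi^n|^2\rangle+\beta\|\phi^n\|_4^4$, which is non-negative because $V\ge 0$ and bounded above by $\tilde{C}_4$ through the uniform energy bounds \eqref{ener stab-H1-1} (equivalently through $\|{\cal G}_h^\frac12\nabla_h\phi^n\|_2\le\|\nabla_h\phi^n\|_2\le\|\Delta_h\phi^n\|_2^\frac12$, $\|\phi^n\|_4\le\|\phi^n\|_\infty^\frac12$, and \eqref{a priori-3}) plus $\|V\|_\infty\le C^*$; and for $e_\lambda^n$, its gradient term is $\le\frac12\|{\cal G}_h^\frac12\nabla_h(\Phi^n+\phi^n)\|_2\,\|{\cal G}_h^\frac12\nabla_h e^n\|_2\le C\|{\cal G}_h^\frac12\nabla_h e^n\|_2$ (using $\|{\cal G}_h^\frac12\nabla_h g\|_2\le\|\nabla_h g\|_2$ and the $H_h^1$ bounds on $\Phi^n,\phi^n$), while its potential and cubic terms are $\le C\|e^n\|_2$ by H\"older, \eqref{prop 1-1-3}, and \eqref{a priori-3}, \eqref{exact-inf-1}.

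No single step is a serious obstacle; the delicate part is the bookkeeping in \eqref{prop 1-1-2-2} --- keeping the averaging operators and the $O(h)$ remainders of the discrete product rule organized so that one lands exactly on the three canonical terms with no stray factor of $h^{-1}$ --- and, throughout, confirming that every constant is charged either to $\Omega$ (for the purely geometric inequalities) or to the exact solution via \eqref{exact-inf-1} and the a-priori bound \eqref{a priori-3}, never to the unknown numerical solution.
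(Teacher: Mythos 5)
Your proposal is correct and follows essentially the same route as the paper's Appendix~A proof: the Fourier-symbol bound $\varphi_1(x)x=1-\mathrm{e}^{-x}\le 1$ for \eqref{prop 1-1-1}, citation of the discrete Sobolev/elliptic estimates of \cite{guo16} for \eqref{prop 1-1-2}, the exact discrete product identity (with the averaging operators $a_x,A_x$, so no $O(h)$ remainders actually arise) plus H\"older for \eqref{prop 1-1-2-2}, and termwise estimates via \eqref{exact-inf-1}, \eqref{a priori-3}, \eqref{ener stab-H1-1} for \eqref{prop 1-1-3}--\eqref{prop 1-1-4}. The constant bookkeeping matches the paper's as well, so no changes are needed.
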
 

In terms of the discrete $\ell^2$ renormalization process, the following comparison estimates between $\tilde{e}^{n+1}$ and $e^{n+1}$ will play an important role in the convergence analysis. The detailed proof will be presented in Appendix~\ref{appendix: prop 3}.

\begin{prop} \label{prop: renormalization} 
Assume that $\| \tilde{\phi}^{n+1} \|_2 \ge 1$. Under an a-priori assumption that 
\begin{equation}\label{a priori-4}
   \| \Delta_h \tilde{e}^{n+1} \|_2 \le 2 ( \tau^{\frac{7}{8}}+h^{\frac{7}{4}} ),  
\end{equation} 
we have the following estimates 
\begin{align} 
  & 
  0 \le \| \tilde{\phi}^{n+1} \|_2 - 1 \le \tilde{C}_6 \tau^2 ,  \label{prop 2-1-0} 
\\
  & 
  \| e^{n+1} \|_2^2 + ( \| \tilde{\phi}^{n+1} \|_2 - 1)^2 
  \le \| \tilde{e}^{n+1} \|_2^2 \le 2 ( \| e^{n+1} \|_2^2 + ( \| \tilde{\phi}^{n+1} \|_2 - 1)^2 ) , 
  \label{prop 2-1-1} 
\\
  & 
  \| \Delta_h e^{n+1} \|_2^2 \le ( 1 +  \tau ) \| \Delta_h \tilde{e}^{n+1} \|_2^2 
  + \tilde{C}_7 \tau^{-1} ( \| \tilde{\phi}^{n+1} \|_2 - 1)^2 , 
  \label{prop 2-1-2} 
\\
  & 
  \| \Delta_h \tilde{e}^{n+1} \|_2^2 \le 2 \| \Delta_h e^{n+1} \|_2^2 
  + \tilde{C}_8 ( \| \tilde{\phi}^{n+1} \|_2 - 1)^2 , 
  \label{prop 2-1-3} 
\end{align} 
in which $\tilde{C}_j$,  $6 \le j \le 8$, depends only on the exact solution. 
\end{prop}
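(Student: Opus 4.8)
The plan is to reduce \eqref{prop 2-1-0}--\eqref{prop 2-1-3} to the single algebraic identity $e^{n+1} = \tilde{e}^{n+1} + \delta\,\phi^{n+1}$, where $\delta := \| \tilde{\phi}^{n+1} \|_2 - 1$, which is the last line of \eqref{error equation-1} and is a direct consequence of the renormalization $\phi^{n+1} = \tilde{\phi}^{n+1}/\| \tilde{\phi}^{n+1}\|_2$, together with the exact norms $\| \phi^{n+1}\|_2 = \| \Phi^{n+1}\|_2 = 1$ and $\delta \ge 0$ (the latter being the standing hypothesis, itself guaranteed by \eqref{lem 1-1-3}). Granting \eqref{prop 2-1-0} for the moment, the other three estimates are elementary Hilbert-space manipulations. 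For \eqref{prop 2-1-1}, I would expand $\| \tilde{e}^{n+1}\|_2^2 = \| e^{n+1}\|_2^2 - 2\delta\langle e^{n+1},\phi^{n+1}\rangle + \delta^2$ and observe that $\langle e^{n+1},\phi^{n+1}\rangle = \langle \Phi^{n+1},\phi^{n+1}\rangle - 1 \le 0$ by Cauchy--Schwarz and $\| \Phi^{n+1}\|_2 = \| \phi^{n+1}\|_2 = 1$; since $\delta \ge 0$ the cross term is nonnegative, which yields the lower bound, while $|2\delta\langle e^{n+1},\phi^{n+1}\rangle| \le \delta^2 + \| e^{n+1}\|_2^2$ (Young) gives the upper bound. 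For \eqref{prop 2-1-2}--\eqref{prop 2-1-3}, I would apply $\Delta_h$ to the identity, note that $\| \Delta_h \phi^{n+1}\|_2 = (1+\delta)^{-1}\| \Delta_h \tilde{\phi}^{n+1}\|_2 \le \| \Delta_h \Phi^{n+1}\|_2 + \| \Delta_h \tilde{e}^{n+1}\|_2 \le C^* + 2(\tau^{7/8}+h^{7/4})$ is an $O(1)$ quantity by \eqref{exact-inf-1} and the a-priori hypothesis \eqref{a priori-4}, and then split the cross term $2\delta\langle \Delta_h \tilde{e}^{n+1},\Delta_h \phi^{n+1}\rangle$ by Young's inequality with weight $\tau$ — which is exactly what produces the $\tau^{-1}$ prefactor in \eqref{prop 2-1-2} — and by $(a+b)^2 \le 2a^2 + 2b^2$ for \eqref{prop 2-1-3}.

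The one substantive step is \eqref{prop 2-1-0}, i.e. the $O(\tau^2)$ bound on $\delta$. I would start from $\| \tilde{\phi}^{n+1}\|_2^2 = \| \phi^n\|_2^2 + \| \tilde{\phi}^{n+1} - \phi^n\|_2^2 = 1 + \| \tilde{\phi}^{n+1} - \phi^n\|_2^2$, which is \eqref{lem 1-7} and stems from the $L^2$ orthogonality \eqref{lem 1-1-2}; this already upgrades $\delta = \sqrt{1 + \| \tilde{\phi}^{n+1} - \phi^n\|_2^2} - 1 \le \frac12\| \tilde{\phi}^{n+1} - \phi^n\|_2^2$ to be \emph{quadratic} in the increment. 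It then remains to establish the linear bound $\| \tilde{\phi}^{n+1} - \phi^n\|_2 \le C\tau$. From the rewritten scheme \eqref{lem 1-5}, $(\frac{1}{\tau} + A)\| \tilde{\phi}^{n+1} - \phi^n\|_2 \le \frac12\| \mathcal{G}_h \Delta_h \phi^n\|_2 + \| V\phi^n\|_2 + \beta\| |\phi^n|^2\phi^n\|_2 + |\lambda^n|\,\| \phi^n\|_2$; here the key point is that $\mathcal{G}_h$ is a contraction on $\ell^2$ (all its eigenvalues lie in $(0,1]$, cf. \eqref{G_operator}), so $\| \mathcal{G}_h \Delta_h \phi^n\|_2 \le \| \Delta_h \phi^n\|_2 \le \tilde{C}_1$ by the a-priori $H_h^2$ bound \eqref{a priori-3}, which keeps the right-hand side $O(1)$ rather than $O(\tau^{-1})$. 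Combined with $\| V\|_\infty,\ \| \phi^n\|_\infty \le \tilde{C}_1$ from \eqref{a priori-3} and $0 \le \lambda^n \le \tilde{C}_4$ from \eqref{prop 1-1-4}, this gives $\| \tilde{\phi}^{n+1} - \phi^n\|_2 \le M/(\frac{1}{\tau} + A) \le M\tau$ with $M$ depending only on the exact solution and, crucially, independent of $A$ since $(\frac{1}{\tau} + A)^{-1} \le \tau$ for every $A \ge 0$. Hence $\delta \le \frac12 M^2 \tau^2 =: \tilde{C}_6 \tau^2$.

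The main — and essentially only — obstacle is precisely this quadratic-in-$\tau$ estimate \eqref{prop 2-1-0}. The gain of one power of $\tau$ over the naive bound is indispensable downstream, because \eqref{prop 2-1-2} reintroduces $(\| \tilde{\phi}^{n+1}\|_2 - 1)^2$ multiplied by $\tau^{-1}$, so a merely $O(\tau)$ estimate would not survive the $\ell^\infty(0,T;H_h^2)$ accumulation in Theorem~\ref{thm:convergence}. Achieving it requires two ingredients to cooperate: the exact orthogonality \eqref{lem 1-1-2}, which converts a first-order increment into a second-order norm deviation, and the a-priori $H_h^2$ regularity \eqref{a priori-3} of $\phi^n$, without which $\mathcal{G}_h \Delta_h \phi^n$ could only be controlled by $\frac{2}{\tau}\| \phi^n\|_2$ and the increment estimate would collapse to $O(1)$. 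Everything else amounts to bookkeeping with the Cauchy--Schwarz and Young inequalities.
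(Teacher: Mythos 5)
Your proposal is correct and follows essentially the same route as the paper's Appendix~B proof: the identity $\tilde e^{n+1}=e^{n+1}-(\|\tilde\phi^{n+1}\|_2-1)\phi^{n+1}$, the sign of $\langle e^{n+1},\phi^{n+1}\rangle$ from $\|\Phi^{n+1}\|_2=\|\phi^{n+1}\|_2=1$ (the paper computes it exactly as $-\tfrac12\|e^{n+1}\|_2^2$, you get the same sign via Cauchy--Schwarz), the Young inequality with weight $\tau$ producing the $\tau^{-1}$ factor in \eqref{prop 2-1-2}, and the $O(\tau^2)$ bound on $\|\tilde\phi^{n+1}\|_2-1$ obtained from the orthogonality \eqref{lem 1-1-2} combined with the $O(\tau)$ increment bound from \eqref{lem 1-5} and the $\ell^2$-contractivity of $\mathcal{G}_h$. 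No gaps; your explicit justification that $\mathcal{G}_h$ has eigenvalues in $(0,1]$ is a point the paper uses implicitly.
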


\subsection{A preliminary $\ell^\infty (0, T; \ell^2) \cap \ell^2 (0, T; H_h^1)$ error estimate at the intermediate stage} 
  
Taking a discrete inner product with~\eqref{error equation-1} by $2 \tilde{e}^{n+1}$ gives  
\begin{equation} 
\begin{aligned}  
  & 
( \frac{1}{\tau} + A ) ( \| \tilde{e}^{n+1} \|_2^2 - \| e^n \|_2^2 +  \| \tilde{e}^{n+1} - e^n \|_2^2 ) 
  +   \langle \mathcal{G}_h^\frac12 \nabla_h e^n , 
   \mathcal{G}_h^\frac12 \nabla_h \tilde{e}^{n+1} \rangle 
\\
  = &   
   - 2 \langle V(\mathbf{x}) e^n , \tilde{e}^{n+1} \rangle 
 - 2 \beta \langle {\cal NLE}^n , \tilde{e}^{n+1} \rangle  
  + 2 \lambda^n \langle e^n , \tilde{e}^{n+1} \rangle  
\\
  & 
  + 2 e_\lambda^n \langle \Phi^n , \tilde{e}^{n+1} \rangle 
  + 2 \langle \zeta_0^n ,  \tilde{e}^{n+1} \rangle ,  
\end{aligned} 
  \label{convergence-L2-1}
\end{equation} 
with an application of the summation by parts formula. The numerical error diffusion inner product on the left hand side could be analyzed as follows 
\begin{equation} 
\begin{aligned} 
  & 
  \langle \mathcal{G}_h^\frac12 \nabla_h e^n , 
   \mathcal{G}_h^\frac12 \nabla_h \tilde{e}^{n+1} \rangle  
\\
  = & 
   \frac12 ( \| \mathcal{G}_h^\frac12 \nabla_h e^n \|_2^2  
   + \| \mathcal{G}_h^\frac12 \nabla_h \tilde{e}^{n+1} \|_2^2  
   - \| \mathcal{G}_h^\frac12 \nabla_h ( \tilde{e}^{n+1} - e^n ) \|_2^2 )  
\\
  \ge & 
   \frac12 ( \| \mathcal{G}_h^\frac12 \nabla_h e^n \|_2^2  
   + \| \mathcal{G}_h^\frac12 \nabla_h \tilde{e}^{n+1} \|_2^2  ) 
   - \frac{1}{\tau} \| \tilde{e}^{n+1} - e^n  \|_2^2 , 
\end{aligned} 
  \label{convergence-L2-2}
\end{equation} 
in which the preliminary inequality~\eqref{prop 1-1-1} in Proposition~\ref{prop: convergence-prelim} has been applied in the last step. Furthermore, the right hand side terms could be bounded with the help of inequalities~\eqref{prop 1-1-3}, \eqref{prop 1-1-4} in Proposition~\ref{prop: convergence-prelim}:  
\begin{align} 
  & 
  - 2 \langle V(\mathbf{x}) e^n , \tilde{e}^{n+1} \rangle 
  \le 2 \| V \|_\infty \cdot \| e^n \|_2 \cdot \| \tilde{e}^{n+1} \|_2 
  \le  2 C^* \| e^n \|_2 \cdot \| \tilde{e}^{n+1} \|_2 , 
   \label{convergence-L2-3-1} 
\\
  & 
  - 2 \beta \langle {\cal NLE}^n , \tilde{e}^{n+1} \rangle   
  \le 2 \beta \| {\cal NLE}^n \|_2 \cdot \| \tilde{e}^{n+1} \|_2 
  \le 2 \tilde{C}_2 \beta \| e^n \|_2 \cdot \| \tilde{e}^{n+1} \|_2  ,  
  \label{convergence-L2-3-2} 
\\
  & 
  2 \lambda^n \langle e^n , \tilde{e}^{n+1} \rangle 
  \le 2 \lambda^n \| e^n \|_2 \cdot \| \tilde{e}^{n+1} \|_2 
  \le 2 \tilde{C}_4 \| e^n \|_2 \cdot \| \tilde{e}^{n+1} \|_2 , 
  \label{convergence-L2-3-3} 
\\
  & 
  2 e_\lambda^n \langle \Phi^n , \tilde{e}^{n+1} \rangle 
  \le 2 | e_\lambda^n | \cdot \| \Phi^n \|_2 \cdot \| \tilde{e}^{n+1} \|_2 
  \le  2 \tilde{C}_5 ( \| e^n \|_2 + \| {\cal G}_h^\frac12 \nabla_h e^n \|_2 ) \| \tilde{e}^{n+1} \|_2  
  \nonumber 
\\
  & \qquad 
  \le 2 \tilde{C}_5 \| e^n \|_2 \cdot \| \tilde{e}^{n+1} \|_2 
  + \frac14 \| {\cal G}_h^\frac12 \nabla_h e^n \|_2^2 
  + 4 \tilde{C}_5^2 \| \tilde{e}^{n+1} \|_2^2 , 
  \label{convergence-L2-3-4} 
\\
  & 
  2 \langle \zeta_0^n ,  \tilde{e}^{n+1} \rangle 
  \le 2 \| \zeta_0^n \|_2 \cdot \|  \tilde{e}^{n+1} \|_2 
  \le \| \zeta_0^n \|_2^2 + \|  \tilde{e}^{n+1} \|_2^2 . 
  \label{convergence-L2-3-5} 
\end{align} 
Subsequently, a substitution of~\eqref{convergence-L2-2}-\eqref{convergence-L2-3-5} into \eqref{convergence-L2-1} leads to 
\begin{equation} 
\begin{aligned}  
  & 
( \frac{1}{\tau} + A ) ( \| \tilde{e}^{n+1} \|_2^2 - \| e^n \|_2^2 ) 
  +   \frac14 \| \mathcal{G}_h^\frac12 \nabla_h e^n \|_2^2  
   + \frac12 \| \mathcal{G}_h^\frac12 \nabla_h \tilde{e}^{n+1} \|_2^2  
\\
  \le &   
    \tilde{C}_9 ( \| e^n \|_2^2 +\| \tilde{e}^{n+1} \|_2^2 )   
    + ( 4 \tilde{C}_5^2 +1 ) \| \tilde{e}^{n+1} \|_2^2 + \| \zeta_0^n \|_2^2 ,  
\end{aligned} 
  \label{convergence-L2-4}
\end{equation} 
with $\tilde{C}_9 = C^* + \tilde{C}_2 \beta + \tilde{C}_4 + \tilde{C}_5$. 


\subsection{A preliminary $\ell^\infty (0, T; H_h^2) \cap \ell^2 (0, T; H_h^3)$ error estimate at the intermediate stage} 
  
Taking a discrete inner product with~\eqref{error equation-1} by $2 \Delta_h^2 \tilde{e}^{n+1}$ leads to  
\begin{equation} 
\begin{aligned}  
  & 
( \frac{1}{\tau} + A ) ( \| \Delta_h \tilde{e}^{n+1} \|_2^2 - \| \Delta_h e^n \|_2^2 
 +  \| \Delta_h ( \tilde{e}^{n+1} - e^n ) \|_2^2 )  
\\
  & 
  +   \langle \mathcal{G}_h^\frac12 \nabla_h \Delta_h e^n , 
   \mathcal{G}_h^\frac12 \nabla_h \Delta_h \tilde{e}^{n+1} \rangle 
\\
  = &   
   - 2 \langle \Delta_h ( V e^n ) , \Delta_h \tilde{e}^{n+1} \rangle 
 - 2 \beta \langle \Delta_h {\cal NLE}^n , \Delta_h \tilde{e}^{n+1} \rangle  
\\
  & 
  + 2 \lambda^n \langle \Delta_h e^n , \Delta_h \tilde{e}^{n+1} \rangle  
  + 2 e_\lambda^n \langle \Delta_h \Phi^n , \Delta_h \tilde{e}^{n+1} \rangle 
  + 2 \langle \Delta_h \zeta_0^n ,  \Delta_h \tilde{e}^{n+1} \rangle ,  
\end{aligned} 
  \label{convergence-H2-1}
\end{equation} 
with a repeated application of the summation by parts formula. The numerical error diffusion inner product term turns out to be 
\begin{equation} 
\begin{aligned} 
  & 
  \langle \mathcal{G}_h^\frac12 \nabla_h \Delta_h e^n , 
   \mathcal{G}_h^\frac12 \nabla_h \Delta_h \tilde{e}^{n+1} \rangle  
\\
  = & 
   \frac12 ( \| \mathcal{G}_h^\frac12 \nabla_h \Delta_h e^n \|_2^2  
   + \| \mathcal{G}_h^\frac12 \nabla_h \Delta_h \tilde{e}^{n+1} \|_2^2  
   - \| \mathcal{G}_h^\frac12 \nabla_h \Delta_h ( \tilde{e}^{n+1} - e^n ) \|_2^2 )  
\\
  \ge & 
   \frac12 ( \| \mathcal{G}_h^\frac12 \nabla_h \Delta_h e^n \|_2^2  
   + \| \mathcal{G}_h^\frac12 \nabla_h \Delta_h \tilde{e}^{n+1} \|_2^2  ) 
   - \frac{1}{\tau} \| \Delta_h ( \tilde{e}^{n+1} - e^n )  \|_2^2 . 
\end{aligned} 
  \label{convergence-H2-2}
\end{equation} 
Similarly, the bounds for the right hand side terms of~\eqref{convergence-H2-1} could be derived as follows, with the help of inequalities~\eqref{prop 1-1-2-2}-\eqref{prop 1-1-4} in Proposition~\ref{prop: convergence-prelim}:  
\begin{align} 
  & 
  \| \Delta_h ( V e^n ) \|_2  \le 
  \breve{C}_1 ( \| V \|_2 + \| \Delta_h V \|_2 ) ( \| e^n \|_2 + \| \Delta_h e^n \|_2 )  \nonumber 
\\
  & \qquad 
  \le 2 \breve{C}_1 C^*  ( \| e^n \|_2 + \| \Delta_h e^n \|_2 ) ,  
   \label{convergence-H2-3-0}  
\\
  & 
  - 2 \langle \Delta_h ( V e^n ) , \Delta_h \tilde{e}^{n+1} \rangle 
  \le 2 \| \Delta_h ( V e^n ) \|_2 \cdot \| \Delta_h \tilde{e}^{n+1} \|_2  \nonumber 
\\
  & \qquad 
  \le 4  \breve{C}_1 C^* ( \| e^n \|_2 + \| \Delta_h e^n \|_2 )  
   \| \Delta_h \tilde{e}^{n+1} \|_2 , 
   \label{convergence-H2-3-1} 
\\
  & 
  - 2 \beta \langle \Delta_h {\cal NLE}^n , \Delta_h \tilde{e}^{n+1} \rangle   
  \le 2 \beta  \| \Delta_h {\cal NLE}^n \|_2 \cdot \| \Delta_h \tilde{e}^{n+1} \|_2  \nonumber 
\\
  & \qquad 
  \le 2 \tilde{C}_3 \beta ( \| e^n \|_2 + \| \Delta_h e^n \|_2 ) \| \Delta_h \tilde{e}^{n+1} \|_2  ,  
  \label{convergence-H2-3-2} 
\\
  & 
  2 \lambda^n \langle \Delta_h e^n , \Delta_h \tilde{e}^{n+1} \rangle 
  \le 2 \lambda^n \| \Delta_h e^n \|_2 \cdot \| \Delta_h \tilde{e}^{n+1} \|_2 
  \le 2 \tilde{C}_4 \| \Delta_h e^n \|_2 \cdot \| \Delta_h \tilde{e}^{n+1} \|_2 , 
  \label{convergence-H2-3-3} 
\\
  & 
  2 e_\lambda^n \langle \Delta_h \Phi^n , \Delta_h \tilde{e}^{n+1} \rangle 
  \le 2 | e_\lambda^n | \cdot \| \Delta_h \Phi^n \|_2 \cdot \| \Delta_h \tilde{e}^{n+1} \|_2  
  \nonumber 
\\
  & \qquad 
  \le  2 \tilde{C}_5 C^* ( \| e^n \|_2 + \| {\cal G}_h^\frac12 \nabla_h e^n \|_2 ) 
   \| \nabla_h \tilde{e}^{n+1} \|_2   \nonumber 
\\
  & \qquad 
  \le  2 \tilde{C}_5 C^* \breve{C}_0 | \Omega|^\frac14 ( \| e^n \|_2 + \| \Delta_h e^n \|_2 ) 
   \| \Delta_h \tilde{e}^{n+1} \|_2 , 
  \label{convergence-H2-3-4} 
\\
  & 
  2 \langle \Delta_h \zeta_0^n ,  \Delta_h \tilde{e}^{n+1} \rangle 
  \le 2 \| \Delta_h \zeta_0^n \|_2 \cdot \| \Delta_h \tilde{e}^{n+1} \|_2 
  \le \| \Delta_h \zeta_0^n \|_2^2 + \|  \Delta_h \tilde{e}^{n+1} \|_2^2 . 
  \label{convergence-H2-3-5} 
\end{align} 
In particular, we notice that inequality $\| {\cal G}_h^\frac12 \nabla_h e^n \|_2 \le \| \nabla_h e^n \|_2 \le \breve{C}_0 | \Omega |^\frac14 \| \Delta_h e^n \|_2$ (which comes from~\eqref{prop 1-1-2}), has been used in the derivation of~\eqref{convergence-H2-3-4}. In turn, a substitution of~\eqref{convergence-H2-2}-\eqref{convergence-H2-3-5} into \eqref{convergence-H2-1} leads to 
\begin{equation} 
\begin{aligned}  
  & 
( \frac{1}{\tau} + A ) ( \| \Delta_h \tilde{e}^{n+1} \|_2^2 - \| \Delta_h e^n \|_2^2 ) 
  +   \frac12 ( \| \mathcal{G}_h^\frac12 \nabla_h \Delta_h e^n \|_2^2  
   + \| \mathcal{G}_h^\frac12 \nabla_h \Delta_h \tilde{e}^{n+1} \|_2^2 ) 
\\
  \le &   
    \tilde{C}_{10} ( 2 \| e^n \|_2^2 + 2 \| \Delta_h e^n \|_2^2 + \| \Delta_h \tilde{e}^{n+1} \|_2^2 )   
    +  \| \Delta_h \tilde{e}^{n+1} \|_2^2 + \| \Delta_h \zeta_0^n \|_2^2 ,  
\end{aligned} 
  \label{convergence-H2-4}
\end{equation} 
with $\tilde{C}_{10} = 2 \breve{C}_1 C^* + \tilde{C}_3 \beta + \tilde{C}_4 + \tilde{C}_5 C^* \breve{C}_0 | \Omega|^\frac14$. As a direct consequence of the a-priori assumption~\eqref{a priori-1} and the truncation error estimate $\| \Delta_h \zeta_0^n \|_2 \le C (\tau + h^2)$, it is straightforward to verify that 
\begin{equation} 
  \| \Delta_h \tilde{e}^{n+1} \|_2 \le 2 ( \tau^\frac78 + h^\frac74) . \label{a priori-5} 
\end{equation} 
Of course, either~\eqref{convergence-H2-4} or \eqref{a priori-5} could not be used as an induction-style argument. Such a preliminary error estimate plays a role of a $H_h^2$ rough error estimate, which would enable us to derive a refined renormalization estimate in Proposition~\ref{prop: renormalization}. 


\subsection{A refined error estimate} 

As mentioned above, the preliminary error estimates \eqref{convergence-L2-4} and \eqref{convergence-H2-4} are not sufficient to conduct a theoretical analysis between two consecutive time steps. Instead, we need a refined error estimate for $e^{n+1}$ to close the argument. On the other hand, the preliminary estimate~\eqref{lem 1-1-3} and the rough error estimate \eqref{a priori-5} enable us to apply Proposition~\ref{prop: renormalization}, so that the inequalities \eqref{prop 2-1-1}-\eqref{prop 2-1-3} become valid, which will play an important role in the refined error estimate. 

For instance, a combination of \eqref{convergence-L2-4} and \eqref{prop 2-1-1} leads to a unified $\ell^2$ error estimate 
\begin{equation} 
\begin{aligned}  
  & 
( \frac{1}{\tau} + A ) ( \| e^{n+1} \|_2^2 - \| e^n \|_2^2 ) 
  +   \frac14 \| \mathcal{G}_h^\frac12 \nabla_h e^n \|_2^2  
   + \frac12 \| \mathcal{G}_h^\frac12 \nabla_h \tilde{e}^{n+1} \|_2^2  
\\
  \le &   
    \tilde{C}_9 ( \| e^n \|_2^2 +\| \tilde{e}^{n+1} \|_2^2 )   
    + ( 4 \tilde{C}_5^2 +1 ) \| \tilde{e}^{n+1} \|_2^2 + \| \zeta_0^n \|_2^2  
    - ( \frac{1}{\tau} + A) ( \| \tilde{\phi}^{n+1} \|_2 - 1)^2 .   
\end{aligned} 
  \label{convergence-L2-5}
\end{equation} 

In terms of a unified $H_h^2$ error estimate, we begin with the following bound, which comes from inequalities \eqref{prop 2-1-0} and \eqref{prop 2-1-2}:  
\begin{equation} 
  \| \Delta_h \tilde{e}^{n+1} \|_2^2 \ge ( 1 - \tau ) \| \Delta_h e^{n+1} \|_2^2 
  - \tilde{C}_7 \tau^{-1} ( \| \tilde{\phi}^{n+1} \|_2 - 1)^2 
  \ge ( 1 - \tau ) \| \Delta_h e^{n+1} \|_2^2 
  - \tilde{C}_6^2 \tilde{C}_7 \tau^3 .  \label{convergence-H2-5} 
\end{equation} 
In turn, its substitution into \eqref{convergence-H2-4} gives 
\begin{equation} 
\begin{aligned}  
  & 
( \frac{1}{\tau} + A ) ( \| \Delta_h e^{n+1} \|_2^2 - \| \Delta_h e^n \|_2^2 ) 
  +   \frac12 ( \| \mathcal{G}_h^\frac12 \nabla_h \Delta_h e^n \|_2^2  
   + \| \mathcal{G}_h^\frac12 \nabla_h \Delta_h \tilde{e}^{n+1} \|_2^2 ) 
\\
  \le &   
    \tilde{C}_{10} ( 2 \| e^n \|_2^2 + 2 \| \Delta_h e^n \|_2^2 + \| \Delta_h \tilde{e}^{n+1} \|_2^2 )   
    +  \| \Delta_h \tilde{e}^{n+1} \|_2^2 + \| \Delta_h e^{n+1} \|_2^2  
\\
  & 
    + \tilde{C}_6^2 \tilde{C}_7 \tau^2 ( 1 + A \tau) 
    + \| \Delta_h \zeta_0^n \|_2^2 .  
\end{aligned} 
  \label{convergence-H2-6}
\end{equation} 
Subsequently, a combination of \eqref{convergence-H2-6} with \eqref{convergence-L2-5} results in 
\begin{equation} 
\begin{aligned}  
  & 
( \frac{1}{\tau} + A ) ( \| e^{n+1} \|_2^2 - \|  e^n \|_2^2
 + \| \Delta_h e^{n+1} \|_2^2 - \| \Delta_h e^n \|_2^2 ) 
\\
  & 
  +   \frac14 ( \| \mathcal{G}_h^\frac12 \nabla_h e^n \|_2^2   
  +   \| \mathcal{G}_h^\frac12 \nabla_h \Delta_h e^n \|_2^2 ) 
\\
  \le &   
    (\tilde{C}_9 + 2 \tilde{C}_{10} ) \| e^n \|_2^2 
    + (\tilde{C}_9 + 4 \tilde{C}_5^2 +1 ) \| \tilde{e}^{n+1} \|_2^2 
    + 2 \tilde{C}_{10} \| \Delta_h e^n \|_2^2 
\\
  & 
    + ( \tilde{C}_{10} + 1 ) \| \Delta_h \tilde{e}^{n+1} \|_2^2   
    +  \| \Delta_h e^{n+1} \|_2^2  
    + \| \zeta_0^n \|_2^2 + \| \Delta_h \zeta_0^n \|_2^2 
    + 2 \tilde{C}_6^2 \tilde{C}_7 \tau^2    
\\
  \le &   
    ( \tilde{C}_9 + 2 \tilde{C}_{10}) \| e^n \|_2^2 
    + 2 \tilde{C}_{11} ( \| e^{n+1} \|_2^2 + ( \| \tilde{\phi}^{n+1} \|_2 - 1)^2 ) 
    + 2 \tilde{C}_{10} \| \Delta_h e^n \|_2^2 
\\
  & 
    +  \| \Delta_h e^{n+1} \|_2^2  
    + ( \tilde{C}_{10} + 1 )  ( 2 \| \Delta_h e^{n+1} \|_2^2  
    + \tilde{C}_8 ( \| \tilde{\phi}^{n+1} \|_2 - 1)^2 ) ) 
\\
  & 
    + \| \zeta_0^n \|_2^2 + \| \Delta_h \zeta_0^n \|_2^2  
    + 2 \tilde{C}_6^2 \tilde{C}_7 \tau^2    
\\
  \le &   
    ( \tilde{C}_9 + 2 \tilde{C}_{10}) \| e^n \|_2^2 
    + 2 \tilde{C}_{11}  \| e^{n+1} \|_2^2 
    + 2 \tilde{C}_{10}  \| \Delta_h e^n \|_2^2 
    + \tilde{C}_{12} \| \Delta_h e^{n+1} \|_2^2  
\\
  & 
    + \| \zeta_0^n \|_2^2 + \| \Delta_h \zeta_0^n \|_2^2  
    + 2 \tilde{C}_6^2 \tilde{C}_7 \tau^2  
    + ( 2 \tilde{C}_{11} + \tilde{C}_8 ( \tilde{C}_{10} +1) ) \tilde{C}_6^2 \tau^4 
\\ 
  \le &   
    ( \tilde{C}_9 + 2 \tilde{C}_{10}) \| e^n \|_2^2 
    + 2 \tilde{C}_{11}  \| e^{n+1} \|_2^2 
    + 2 \tilde{C}_{10}  \| \Delta_h e^n \|_2^2 
    + \tilde{C}_{12} \| \Delta_h e^{n+1} \|_2^2  
\\
  & 
    + \| \zeta_0^n \|_2^2 + \| \Delta_h \zeta_0^n \|_2^2  
    + ( 2 \tilde{C}_6^2 \tilde{C}_7 + 1) \tau^2 ,  
\end{aligned} 
  \label{convergence-H2-7}
\end{equation} 
with $\tilde{C}_{11} = \tilde{C}_9 + 4 \tilde{C}_5^2 +1$, $\tilde{C}_{12} = 1 + 2 (\tilde{C}_{10} +1)$, provided that $\tau$ is sufficiently small. Notice that the preliminary renormalization estimates \eqref{prop 2-1-1} and \eqref{prop 2-1-3} have been applied in the second step in the derivation. Subsequently, we denote a quantity 
\begin{equation} 
   Q_h^k := \| e^k \|_2^2 + \| \Delta_h e^k \|_2^2 , \label{error quantity-1} 
\end{equation} 
so that the following estimate becomes available: 
\begin{equation} 
\begin{aligned}  
  & 
( \frac{1}{\tau} + A ) ( Q_h^{n+1} - Q_h^n )  
  +   \frac14 ( \| \mathcal{G}_h^\frac12 \nabla_h e^n \|_2^2   
  + \gamma_0 \tau \| \mathcal{G}_h^\frac12 \nabla_h \Delta_h e^n \|_2^2 ) 
\\
  \le &   
    ( \tilde{C}_9 + 2 \tilde{C}_{10} ) Q_h^n 
    + ( 2 \tilde{C}_{11}  +  \tilde{C}_{12} )  Q_h^{n+1}   
    + \| \zeta_0^n \|_2^2 + \| \Delta_h \zeta_0^n \|_2^2  
    + ( 2 \tilde{C}_6^2 \tilde{C}_7 + 1) \tau^2 .   
\end{aligned} 
  \label{convergence-H2-8}
\end{equation} 
Consequently, an application of the discrete Gronwall inequality gives the desired convergence estimate:
\begin{equation}
\begin{aligned}
  &
  Q_h^{n+1} \le C (\tau^2 + h^4) , \quad
  \| e^{n+1} \|_2 +  \| \Delta_h e^{n+1} \|_2 
  \le C ( Q_h^{n+1} )^\frac12 \le C (\tau + h^2) ,
\\
  &
  \tau \sum_{k=1}^n \| \mathcal{G}_h^\frac12 \nabla_h e^k \|_2^2   
  + \tau \sum_{k=1}^n \| \mathcal{G}_h^\frac12 \nabla_h \Delta_h e^k \|_2^2 
  \le C ( \tau^2 + h^4 ) ,  
\end{aligned}
  \label{convergence-H2-9}
\end{equation}
in which the truncation error accuracy $\| \zeta_0^n \|_2 , \, \| \Delta_h \zeta_0^n \|_2 \leq C(\tau +h^2)$ has been applied. This finishes the unified $H_h^2$ error estimate.

Finally, with the help of the error estimate at the next time step, we see that the a-priori assumption \eqref{a priori-1} is satisfied at $t^{n+1}$:
\begin{equation} 
 \| e^{n+1} \|_2  , \,  \| \Delta_h e^{n+1} \|_2
 \le C(\tau +h^2)  \le \tau^\frac78 + h^\frac74 , 
 \label{a priori-6} 
\end{equation}
provided that $\tau$ and $h$ are sufficiently small. The proof of Theorem~\ref{thm:convergence} is completed.

\begin{remark} 
It is observed that, the unified $\ell^2$ error estimate~\eqref{convergence-L2-5}, combined with the $\ell^2$ renormalization estimate~\eqref{prop 2-1-1}, is sufficient to derive an $\ell^\infty (0, T; \ell^2) \cap \ell^2 (0, T; H_h^1)$ error bound for the numerical solution. However, such an error estimate is not able to form a closed theoretical argument. In more details, the $\ell^\infty$ bound~\eqref{a priori-3} for the numerical solution at the previous step has played an important role in the $\ell^2$ error estimate, while such a maximum norm bound comes from the $H_h^2$ convergence estimate~\eqref{a priori-1} and the discrete Sobolev inequality~\eqref{a priori-2}. As a result, an $H_h^2$ error estimate is necessary to go through the the convergence analysis, and an application of the inverse inequality is avoided in the theoretical derivation, because of the discrete Sobolev embedding~\eqref{prop 1-1-2}, from $H_h^2$ into $\ell^\infty$ and $W_h^{1,4}$. A similar methodology of an $H_h^2$ error estimate has been reported in~\cite{gottlieb12b} to deal with the convergence analysis for the 3-D viscous Burgers' equation.
\end{remark} 

\begin{remark} 
It is noticed that, the $\ell^2$ renormalization error estimate~\eqref{prop 2-1-1} provides a detailed analysis between the $\| \cdot \|_2$ bounds of $\tilde{e}^{n+1}$, $e^{n+1}$ and $( \| \tilde{\phi}^{n+1} \|_2 -1) \phi^{n+1} = e^{n+1} - \tilde{e}^{n+1}$, which forms a functional triangle. Moreover, a careful application of Law-of-Cosine-style analysis leads to the left inequality in \eqref{prop 2-1-1}, 
and its derivation is based on a subtle fact that $\| \Phi^{n+1} \|_2 = \| \phi^{n+1} \|_2 =1$, so that the functional angle between $e^{n+1} - \tilde{e}^{n+1}$ and $e^{n+1}$ could be carefully analyzed. On the other hand, in the $\ell^\infty (0, T; H_h^2) \cap \ell^2 (0, T; H_h^3)$ error estimate, there is no constraint that $\| \Delta_h \Phi^{n+1} \|_2 = \| \Delta_h \phi^{n+1} \|_2$, so that a bound for the inner product between $\Delta_h (e^{n+1} - \tilde{e}^{n+1})$ and $\Delta_h e^{n+1}$ becomes very difficult to obtain. Instead, we have to use the Cauchy inequality to derive this bound, while an increment factor of $\tau$ in front of $\| \Delta_h e^{n+1} \|_2^2$ gives a singular $\tau^{-1}$ factor for $( \| \tilde{\phi}^{n+1} \|_2 -1)^2$. This derivation leads to the renormalized $H_h^2$ error estimate~\eqref{prop 2-1-2}. 

To overcome the difficulty associated with this singular $\tau^{-1}$ coefficient, we make another key observation that, $\| \tilde{\phi}^{n+1} \|_2 -1$ is of order $O (\tau^2)$ (instead of $O (\tau)$, as stated in~\eqref{prop 2-1-0}. Such an $O (\tau^2)$ accuracy order comes from the rewritten form~\eqref{lem 1-5} of the numerical solution, $H_h^2$ convergence estimate at the previous time step, combined with an $\ell^2$ pythagorean identity (due to the orthogonality~\eqref{lem 1-1-2} between $\phi^n$ and $\tilde{\phi}^{n+1} - \phi^n$). With the help of $O (\tau^2)$ estimate~\eqref{prop 2-1-0} for $\| \tilde{\phi}^{n+1} \|_2 -1$, we are able to absorb the term $\tau^{-1} ( \| \tilde{\phi}^{n+1} \|_2 -1)^2$ as an additional truncation error. This in turn makes an $\ell^\infty (0, T; H_h^2) \cap \ell^2 (0, T; H_h^3)$ optimal rate error estimate go through. 

In comparison, in the $H_h^2$ error estimate for the 3-D viscous Burgers' equation in~\cite{gottlieb12b}, 
the theoretical analysis  is much more straightforward, since there is no renormalization stage in the associated numerical design. Therefore, the convergence analysis for any PDE system with a nonlinear constraint (such as the $L^2$ norm-preservation) turns out to be more challenging than the one without it, which comes from the difficulty in the error estimate for the renormalization stage. 

\end{remark}

\section{Numerical results} \label{sec: numerical results} 
We present comprehensive numerical tests using the second-order centered finite difference spatial discretization to obtain the local minimum of   \eqref{eng_orig} subject to \eqref{cons} with periodic boundary conditions. The stopping criterion, $\frac{\Vert \phi^{n+1} - \phi^n \Vert_\infty}{\tau} \leq 10^{-8}$,  
ensures convergence, with all computations performed in MATLAB on standard hardware.

\subsection{One-dimensional numerical results}

\begin{example}\label{Ex_omg01D}
In this example, we consider the local minimum of   \eqref{eng_orig} subject to \eqref{cons}  on the interval $[-16, 16]$. The potential function and parameters are specified as
\begin{align}
V(x) = \frac{x^2}{2} + 25 \sin^2\left(\frac{\pi x}{4}\right), \quad \beta = 250, \quad \phi_0(x) = \exp\left(-\frac{|x|^2}{2}\right)/\pi^{1/4}.
\end{align}
\end{example}
\subsubsection{Energy stability test}
To examine the energy stability stated in Proposition~\ref{prop: energy stability}, different values of $A$ are tested in Figure~\ref{fig:DiffA}. It is observed that the choice of $A$ as
\begin{equation}\label{A_numval}
 A = A_{\text{ref}}:=\frac{3\beta}{2} \| \phi^n \|_\infty^2 
   + \frac12 \| V \|_\infty+1, 
\end{equation}
is sufficient to ensure energy stability, whereas smaller values may result in instability. In the following results, whenever not explicitly specified, the stabilizer is set as \eqref{A_numval}.

\begin{figure}[H]
\centering
    \includegraphics[width=0.6\textwidth]{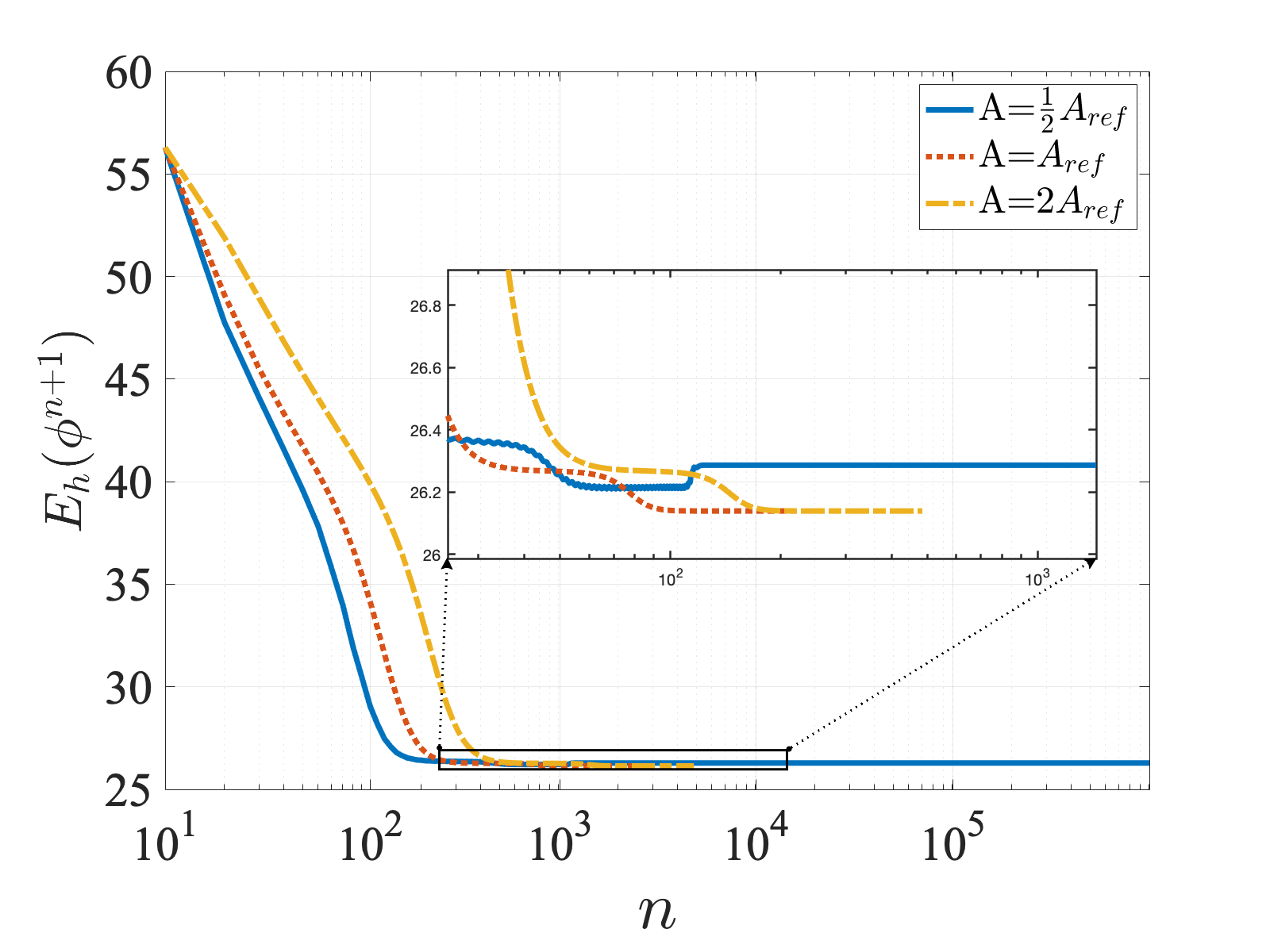}
\caption{Numerical energy evolution in terms of time step, with \(h=\frac{1}{128}\) and \(\tau =\frac{1}{4}\) for Example~\ref{Ex_omg01D}.}
\label{fig:DiffA}
\end{figure}

\subsubsection{Convergence test}
Figure~\ref{fig:1D_rate} illustrates the $\ell^2$ and $\ell^\infty$ numerical errors associated with scheme \eqref{num_scheme}. Here, $\phi_g$ denotes the numerical local minimum obtained under the stopping criterion, while $\phi_{g,\frac{\tau}{2}}^{\text{ref}}$ and $\phi_{g,\frac{h}{2}}^{\text{ref}}$ correspond to reference solutions computed with refined temporal and spatial step sizes $\tau/2$ and $h/2$, respectively. The results confirm that the temporal convergence rate is $\mathcal{O}(\tau)$ and the spatial convergence rate is $\mathcal{O}(h^2)$, in agreement with Theorem~\ref{thm:convergence}.

\begin{figure}[H]
\centering
\begin{subfigure}{0.45\textwidth}
    \includegraphics[width=\textwidth]{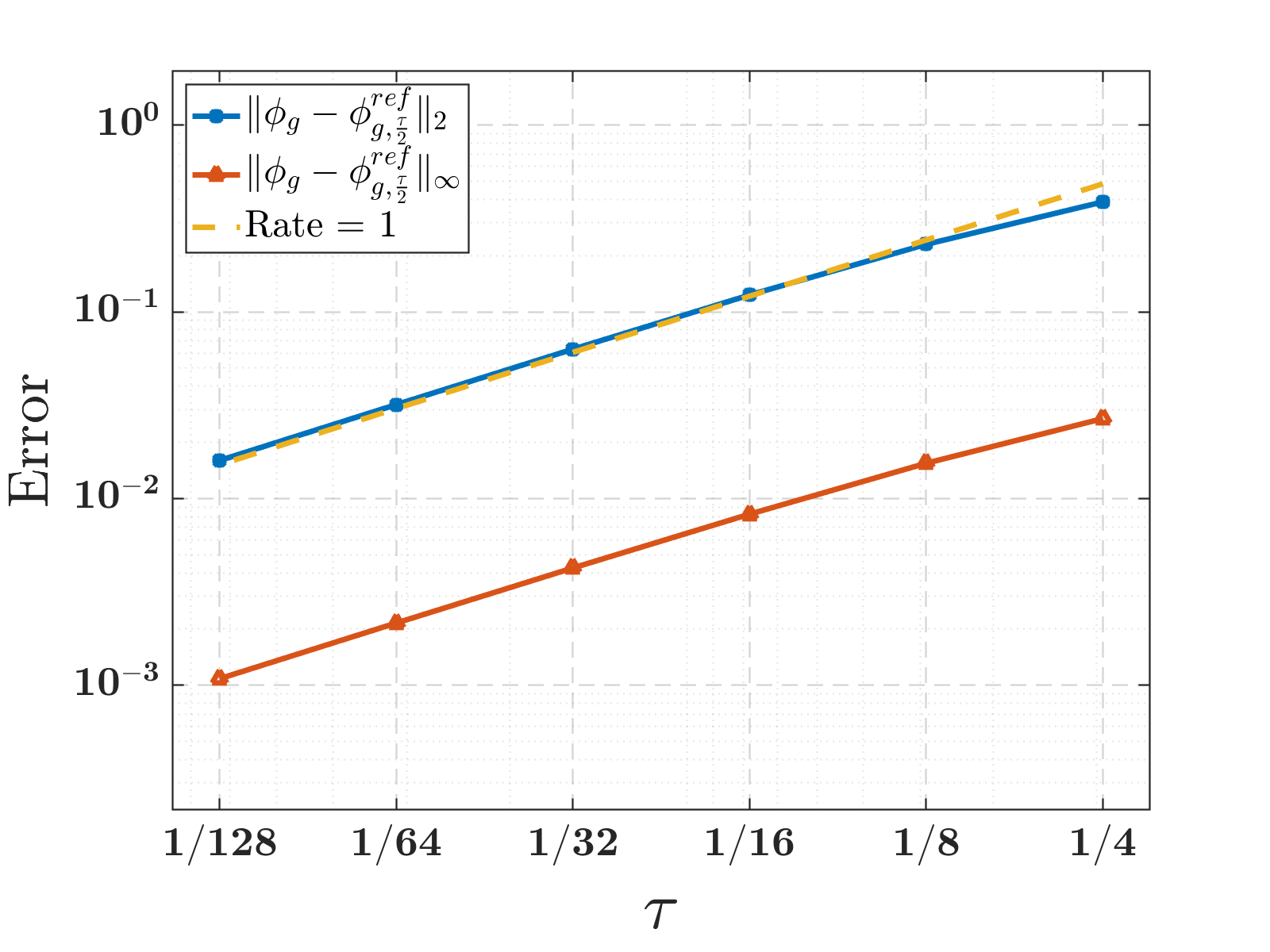}
    \caption{Temporal refinement with $h=\frac{1}{128}$}
\end{subfigure}
\hspace{-0.1cm}
\begin{subfigure}{0.45\textwidth}
    \includegraphics[width=\textwidth]{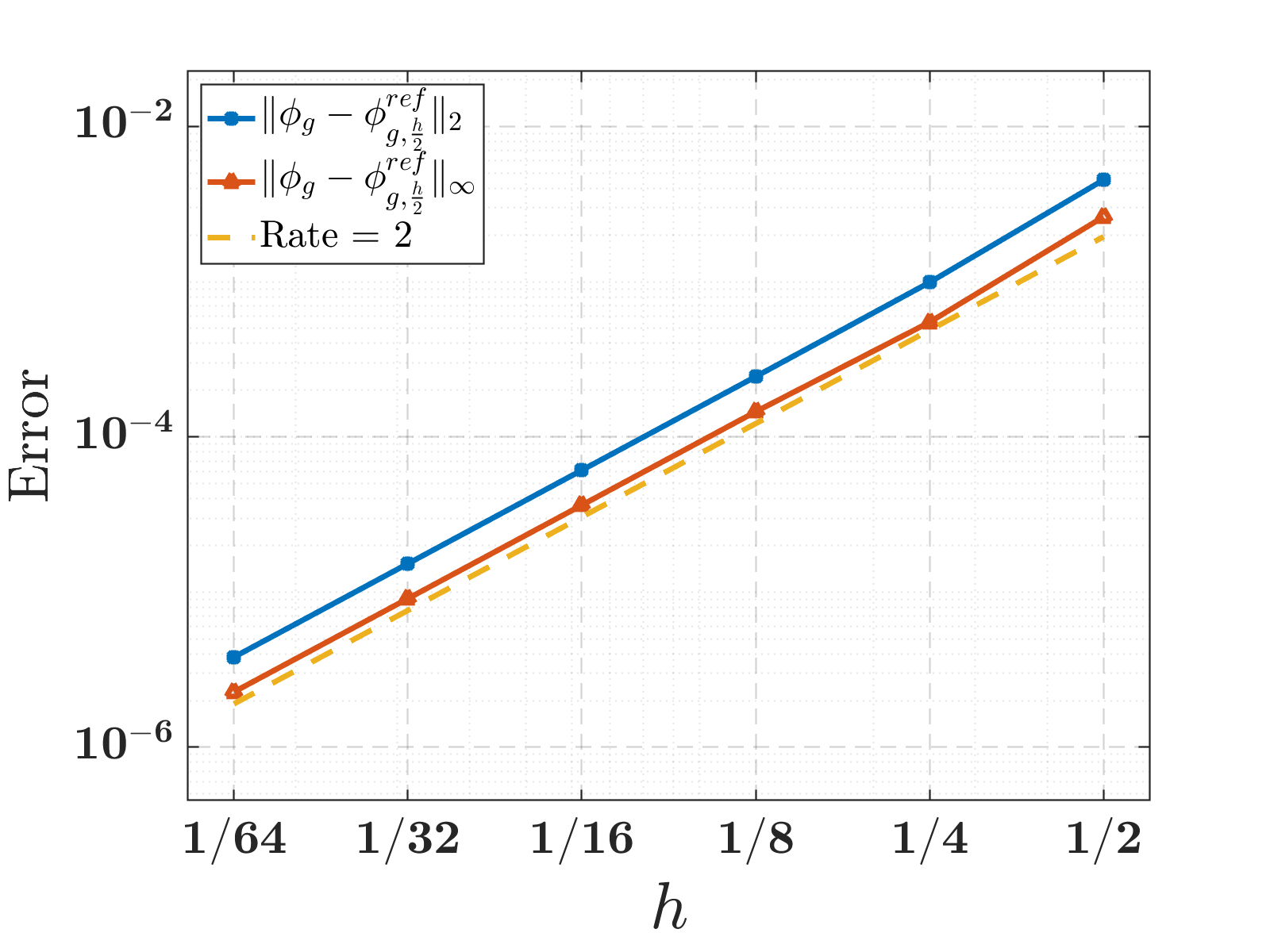}
    \caption{Spatial refinement with $\tau=\frac{1}{1000}$}
\end{subfigure}
\caption{Convergence rates for Example~\ref{Ex_omg01D}.}
\label{fig:1D_rate}
\end{figure}

\begin{figure}[H]
\centering
    \includegraphics[width=0.6\textwidth]{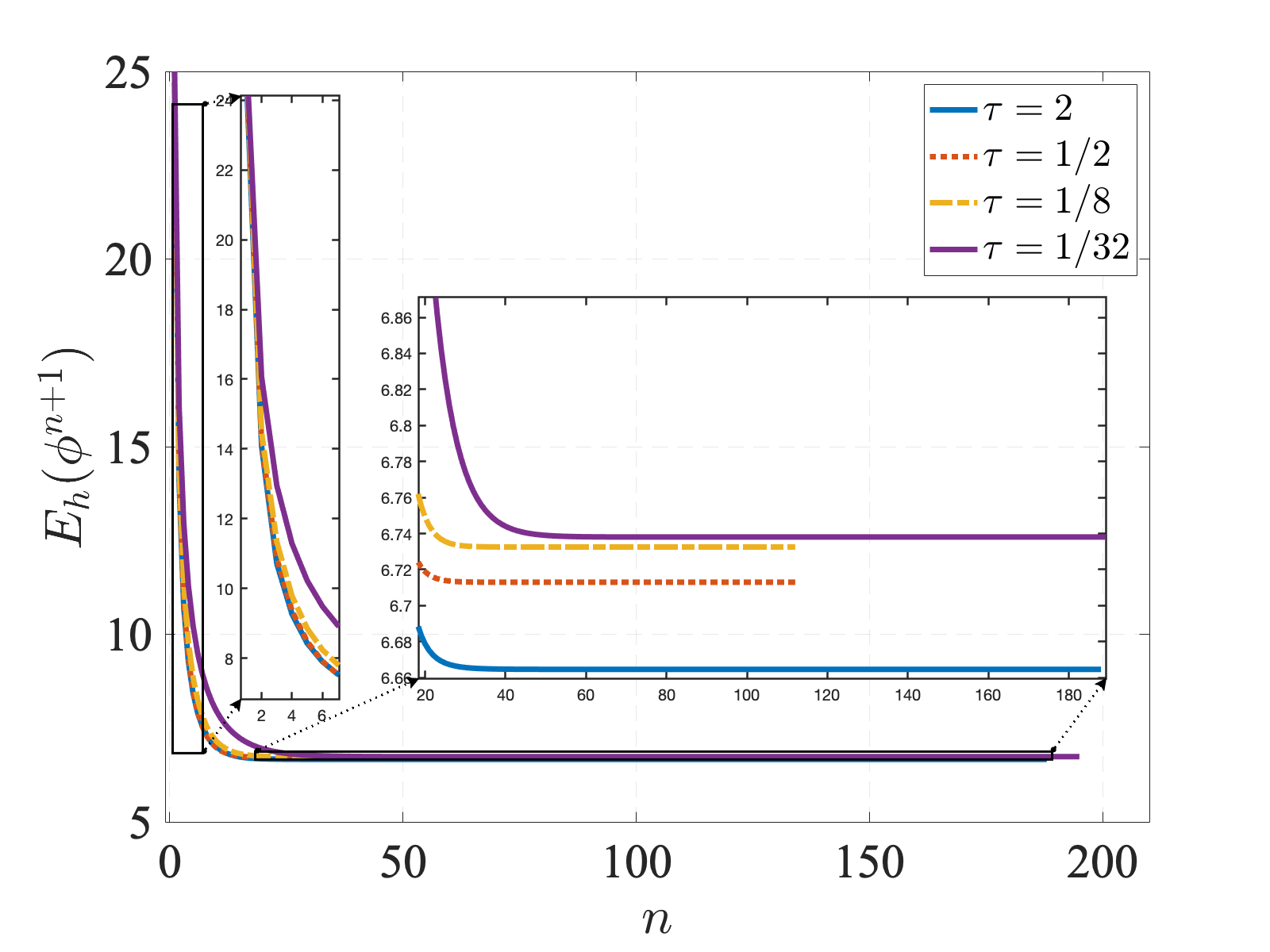}
\caption{Numerical energy evolution in terms of time step, with \(h=\frac{1}{16}\) for Example~\ref{Ex:2Domg0}.}
\label{fig:Diffdt2D}
\end{figure}

\subsection{Two-dimensional numerical results}
\begin{example}\label{Ex:2Domg0}\cite[Example 4.2]{YinHuaZha}
In this example, we consider the two-dimensional profile of \eqref{eng_orig} over the bounded domain $[-8,8]^2$, with $V({\bf x}) = \frac{1}{2}|{\bf x}|^2$,  $\beta = 300$. The initial condition is chosen as $\phi({\bf x}) = \frac{\e^{-V({\bf x})}}{\|\e^{-V({\bf x})}\|_2}$.
\end{example}

To investigate the energy stability of the numerical scheme \eqref{num_scheme} with the stabilizer parameter given by~\eqref{A_numval}, we perform simulations using several choices of the temporal step size $\tau$. The results, presented in Figure~\ref{fig:Diffdt2D}, indicate that the discrete energy is always non-increasing for all tested values of $\tau$, including relatively large time step sizes. This demonstrates that the stabilizer parameter given by \eqref{A_numval} is effective in maintaining energy stability in two-dimensional computations.

The convergence behavior of the scheme is further examined in Figure~\ref{fig:2D_rate}, where the numerical errors are measured against reference solutions obtained with refined temporal and spatial resolutions. The observed convergence rates confirm that the temporal error scales as $\mathcal{O}(\tau)$, while the spatial error scales as $\mathcal{O}(h^2)$, in agreement with the theoretical results established in Theorem~\ref{thm:convergence}. These results collectively demonstrate that scheme \eqref{num_scheme} is both stable and convergent.

\begin{figure}[H]
\centering
\begin{subfigure}{0.45\textwidth}
    \includegraphics[width=\textwidth]{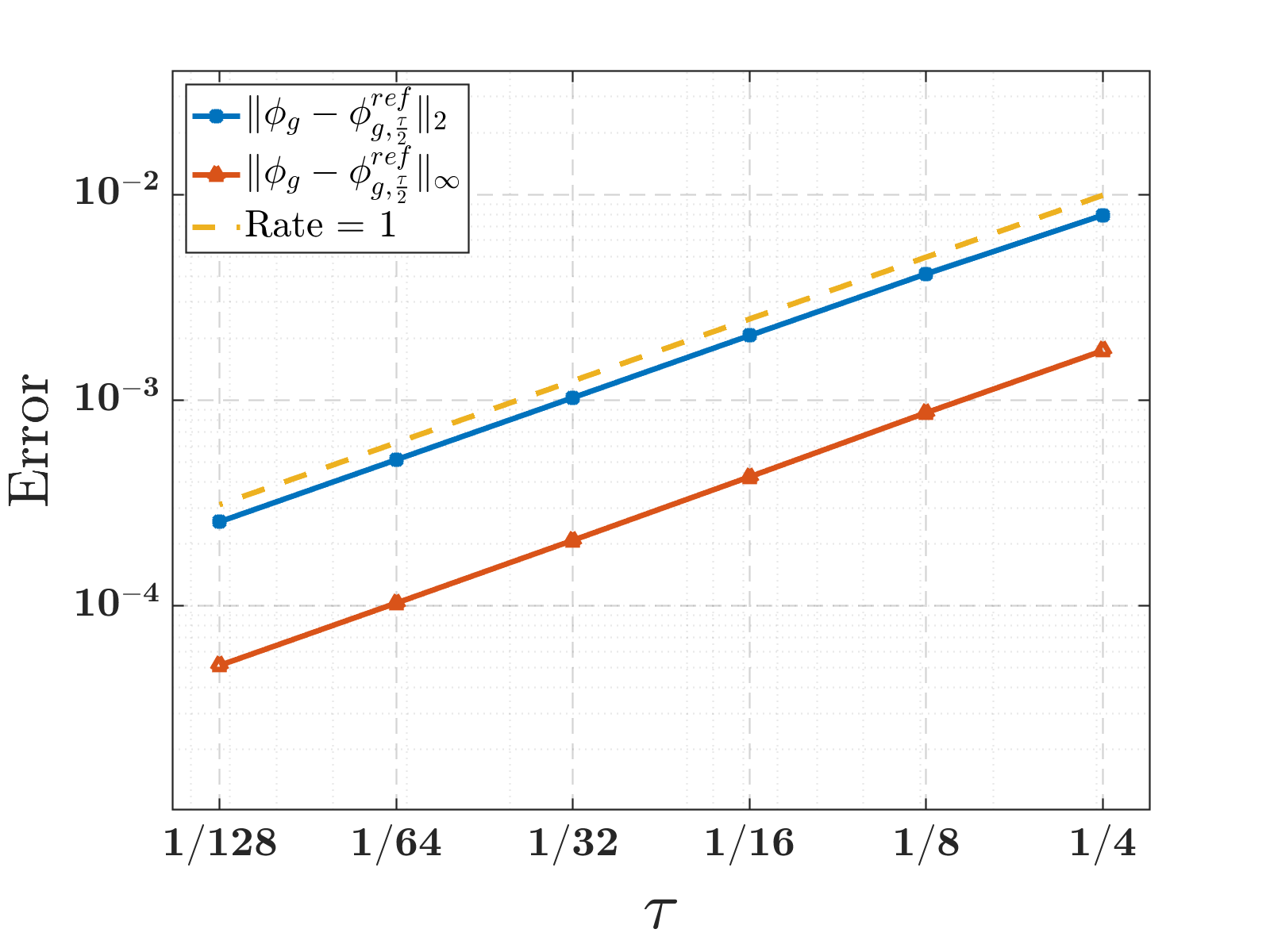}
    \caption{Temporal refinement with $h=\frac{1}{16}$}
\end{subfigure}
\hspace{-0.1cm}
\begin{subfigure}{0.45\textwidth}
    \includegraphics[width=\textwidth]{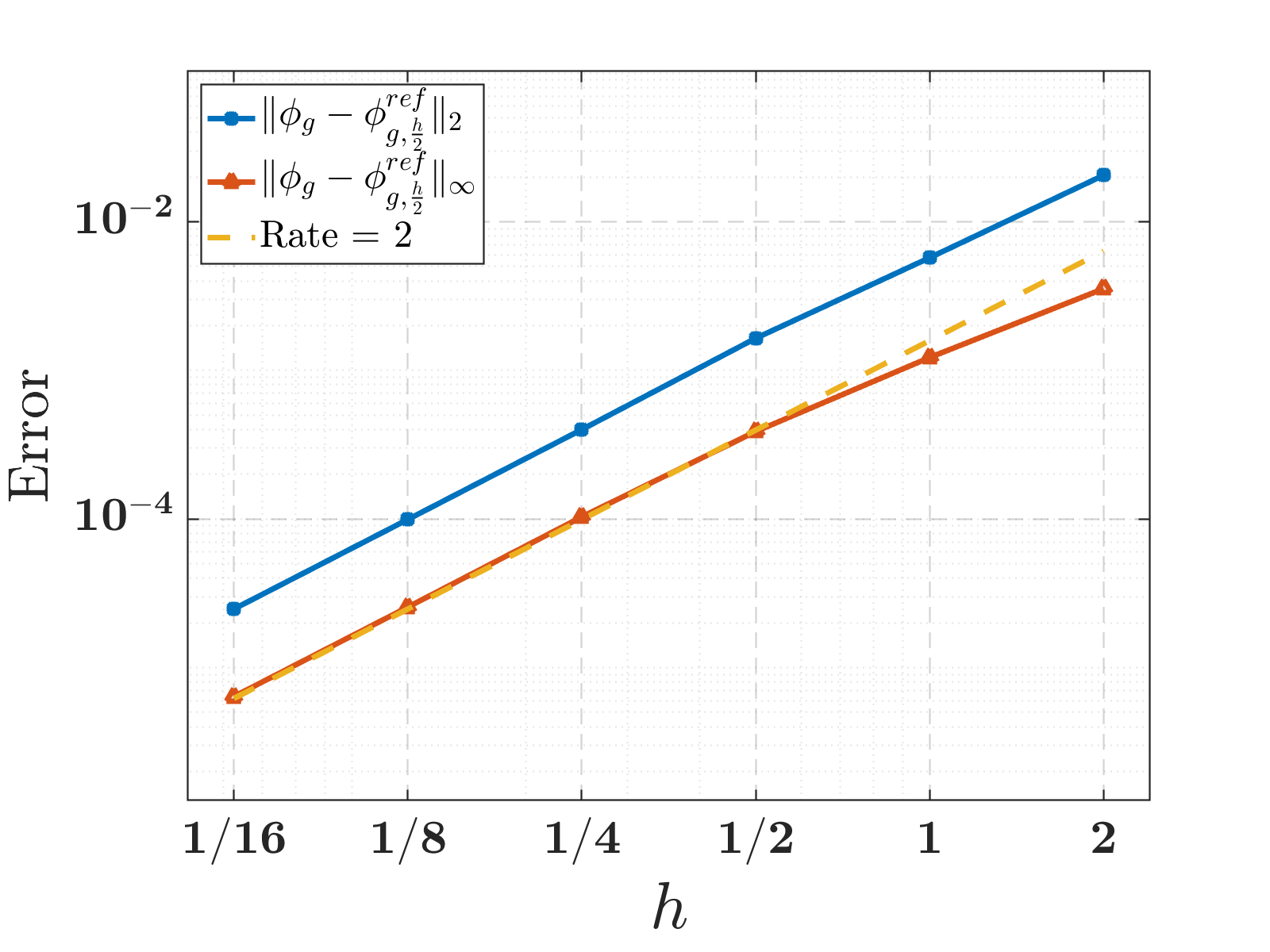}
    \caption{Spatial refinement with $\tau=\frac{1}{1000}$}
\end{subfigure}
\caption{Convergence rates for Example~\ref{Ex:2Domg0}.}
\label{fig:2D_rate}
\end{figure}

\section{Conclusions} \label{sec: conclusion}  

In this work, a normalized gradient flow approach, which contains a Lagrange multiplier to enforce the $L^2$ conservation, is used in the computation of the ground state Bose--Einstein condensates (BEC). The free energy dissipation comes from the variational structure of the PDE. An efficient numerical scheme is proposed and analyzed for such a normalized gradient equation. In the numerical design, an explicit approximation is applied to the nonlinear term and the external confinement term, an exponential time differencing (ETD) formula is used in the computation of the diffusion term, combined with an artificial regularization term. These numerical treatments create an intermediate profile, followed by an $L^2$ normalization at the next stage. The proposed numerical scheme has been proved to preserve the following theoretical properties: (1) an explicit computation at each time step, (2) unconditional free energy dissipation, (3) $L^2$ norm conservation at each time step, (4) a theoretical justification of convergence analysis and optimal rate error estimate. This turns out to be the first such work that preserves the combined four theoretical properties in the computation of the BEC problem. In addition, a few numerical examples are presented to validate these theoretical results, demonstrating excellent agreement with established reference solutions.

\appendix 

\section{Proof of Proposition~\ref{prop: convergence-prelim}} \label{appendix: prop 2} 

The derivation of the two inequalities in~\eqref{prop 1-1-1} is based on a detailed Fourier expansions of ${\cal G}_h^\frac12 \nabla_h f$ and ${\cal G}_h^\frac12 \Delta_h f$; see a similar proof in an existing work~\cite{LiX2025} (in Proposition 2.1). The technical details are skipped for the sake of brevity.  

Inequalities in~\eqref{prop 1-1-2} stand for a discrete Sobolev embedding from $H^2$ into $W^{1,4}$ and $L^\infty$, as well as a discrete elliptic regularity estimate. These inequalities have been proved in~\cite{guo16} (in Lemma 3.1), and we just cite the results here. 

Inequality~\eqref{prop 1-1-2-2} plays an important role in the nonlinear estimate. We begin with a nonlinear expansion in the finite difference approximation: 
\begin{equation} 
  D_x^2 (f g) = a_x (A_x f) D_x^2 g + 2 a_x (D_x f) a_x (D_x g) + a_x (A_x g) D_x^2 f . 
  \label{prop 1-NL-1} 
\end{equation} 
In turn, an application of discrete H\"older inequality implies that 
\begin{equation} 
  \| D_x^2 (f g) \|_2 \le \| f \|_\infty \cdot \| D_x^2 g \|_2 + 2 \| D_x f \|_4 \cdot \| D_x g \|_4 
   +\|  g \|_\infty  \| D_x^2 f \|_2 ,  
  \label{prop 1-NL-2} 
\end{equation} 
in which an obvious fact, $\| A_x \varphi \|_2 \le \| \varphi \|_2$, $\| a_x \psi \|_2 \le \| \psi \|_2$, has been applied. Similar estimates could be derived for $D_y^2 (fg)$ and $D_z^2 (fg)$: 
\begin{equation} 
\begin{aligned} 
  & 
  \| D_y^2 (f g) \|_2 \le \| f \|_\infty \cdot \| D_y^2 g \|_2 + 2 \| D_y f \|_4 \cdot \| D_y g \|_4 
   +\|  g \|_\infty  \| D_y^2 f \|_2 ,  
\\
  & 
  \| D_z^2 (f g) \|_2 \le \| f \|_\infty \cdot \| D_z^2 g \|_2 + 2 \| D_z f \|_4 \cdot \| D_z g \|_4 
   +\|  g \|_\infty  \| D_z^2 f \|_2 . 
\end{aligned} 
  \label{prop 1-NL-3} 
\end{equation}  
Then we arrive at 
\begin{equation} 
\begin{aligned} 
  & 
  \| \Delta_h (f g) \|_2 \le \| f \|_\infty ( \| D_x^2 g \|_2 + \| D_y^2 g \|_2 + \| D_z^2 g \|_2 ) 
  + 2 \| \nabla_h f \|_4 \cdot \| \nabla_h g \|_4 
\\
  & 
   + \|  g \|_\infty  ( \| D_x^2 f \|_2 + \| D_y^2 f \|_2 + \| D_z^2 f \|_2 )  ,  
\\
  \le & 
  \sqrt{3} \breve{C}_0 ( \| f \|_\infty ( \| g \|_2 + \| \Delta_h g \|_2 ) 
  + \| g \|_\infty ( \| f \|_2 + \| \Delta_h f \|_2 ) ) 
  + 2 \| \nabla_h f \|_4 \cdot \| \nabla_h g \|_4 
\\
  \le & 
  \sqrt{3} \breve{C}_0^2 ( \| f \|_2 + \| \Delta_h f \|_2 )  ( \| g \|_2 + \| \Delta_h g \|_2 ) ) 
  + 2 \breve{C}_0^2 \| \Delta_h f \|_2 \cdot \| \Delta_h g \|_2 
\\
  \le & 
  (\sqrt{3} +2)  \breve{C}_0^2 ( \| f \|_2 + \| \Delta_h f \|_2 )  ( \| g \|_2 + \| \Delta_h g \|_2 ) ) , 
\end{aligned} 
  \label{prop 1-NL-4} 
\end{equation}  
in which the discrete Sobolev inequalities \eqref{prop 1-1-2} have been extensively applied. This has proved inequality \eqref{prop 1-1-2-2}, by taking $\breve{C}_1 = (\sqrt{3} +2) \breve{C}_0^2$.

In terms of the $\| \cdot \|_2$ estimate of ${\cal NLE}^n$, we see that the $\ell^\infty$ bound for the approximate and numerical solutions (given by~\eqref{exact-inf-1} and \eqref{a priori-3}), implies that 
\begin{equation} 
\begin{aligned}
  & 
  \| | \phi^n |^2 e^n \|_2 \le \| \phi^n \|_\infty^2 \cdot \| e^n \|_2 \le \tilde{C}_1^2 \| e^n \|_2 , 
\\
  & 
   \| ( (\Phi^n)^c e^n + \phi^n ( e^n )^c ) \Phi^n \|_2 
   \le (\| \Phi^n \|_\infty + \| \phi^n \|_\infty ) \| e^n \|_2  \cdot \| \Phi^n \|_\infty  
\\
  &  \qquad \qquad \qquad \qquad \qquad \quad \, \, 
   \le C^* ( C^* + \tilde{C}_1 ) \| e^n \|_2 , 
\\
  & \mbox{so that} \quad 
  \| {\cal NLE}^n \|_2 \le \| | \phi^n |^2 e^n \|_2  + \| ( (\Phi^n)^c e^n + \phi^n ( e^n )^c ) \Phi^n \|_2   
  \le \tilde{C}_3 \| e^n \|_2 , 
\end{aligned} 
  \label{prop 1-2} 
\end{equation} 
in which $\tilde{C}_3 = \tilde{C}_1^2 + C^* ( C^* + \tilde{C}_1 ) $. 

Regarding the $H_h^2$ estimate for ${\cal NLE}^n$, we begin with the following observation, which turns out to be an application of inequality~\eqref{prop 1-1-2-2}, as well as the a-priori estimate~\eqref{a priori-3}:
\begin{equation} 
\begin{aligned} 
  & 
  \| | \phi^n |^2 \|_2 \le \| \phi^n \|_\infty \cdot \| \phi^n \|_2  \le \tilde{C}_1^2 | \Omega|^\frac12 ,   
\\
  & 
   \| \Delta_h (| \phi^n |^2 ) \|_2 \le \breve{C}_1 ( \| \phi^n \|_2 + \| \Delta_h \phi^n \|_2 )^2 
   \le \breve{C}_1 \tilde{C}_1^2 ( 1 + | \Omega |^\frac12 )^2 .  
\end{aligned} 
  \label{prop 1-3} 
\end{equation} 
Subsequently, a further application of~\eqref{prop 1-1-2-2} reveals that 
\begin{equation} 
\begin{aligned} 
  \| \Delta_h (| \phi^n |^2 e^n )  \| \le & \breve{C}_1 (  \| | \phi^n |^2 \|_2 + \| \Delta_h (| \phi^n |^2 ) \|_2 ) 
  ( \| e^n \|_2 + \| \Delta_h e^n \|_2)  
\\ 
  \le & 
  \breve{C}_1  (\breve{C}_1 + 1) \tilde{C}_1^2 ( 1 + | \Omega |^\frac12 )^2
  ( \| e^n \|_2 + \| \Delta_h e^n \|_2) . 
\end{aligned} 
  \label{prop 1-4} 
\end{equation} 
The other part of $\Delta_h {\cal NLE}^n$ could be similarly analyzed, and the technical details are skipped for the sake of brevity: 
\begin{equation} 
\begin{aligned} 
  & 
  \| \Delta_h ( (\Phi^n)^c e^n + \phi^n ( e^n )^c ) \Phi^n \|_2  
\\
  \le & 
  \breve{C}_1 (\breve{C}_1 + 1)  C^* ( \tilde{C}_1 + C^* ) ( 1 + | \Omega |^\frac12 )^2 
  ( \| e^n \|_2 + \| \Delta_h e^n \|_2) . 
\end{aligned} 
  \label{prop 1-5} 
\end{equation} 
In turn, a combination of \eqref{prop 1-4} and \eqref{prop 1-5} results in the desired gradient estimate: 
\begin{equation} 
\begin{aligned} 
  & 
   \| \Delta_h {\cal NLE}^n \|_2 \le  \| \Delta_h ( | \phi^n |^2 e^n ) \|_2 
   +  \| \Delta_h ( (\Phi^n)^c e^n + \phi^n ( e^n )^c ) \Phi^n \|_2  
\\
  \le & 
   \breve{C}_1 (\breve{C}_1 + 1)  ( (C^*)^2 + C^* \tilde{C}_1 + \tilde{C}_1^2 ) 
   ( 1 + | \Omega |^\frac12 )^2 ( \| e^n \|_2 + \| \Delta_h e^n \|_2) . 
\end{aligned} 
  \label{prop 1-6} 
\end{equation} 
This has proved the second inequality in~\eqref{prop 1-1-3}, by taking with $\tilde{C}_3 =  \breve{C}_1 (\breve{C}_1 + 1)  ( (C^*)^2 + C^* \tilde{C}_1 + \tilde{C}_1^2 )  ( 1 + | \Omega |^\frac12 )^2$. 

To obtain an upper bound for the Lagrange multiplier $\lambda^n$, we see that a combination of the representation formula~\eqref{lambda_disc} and the energy stability estimate~\eqref{ener stab-0} (for the numerical solution) reveals that 
\begin{equation}  
  0 \le \lambda^n 
  = \frac12 \| {\cal G}_h^\frac12 \nabla_h \phi^n \|_2^2  
  + \langle V({\bf x}) \phi^n , \phi^n \rangle 
  + \beta \langle |\phi^n|^2 \phi^n, \phi^n \rangle  \le 2 E_h (\phi^n ) \le 2 C_0 . 
  \label{prop 1-7} 
\end{equation} 
Therefore, the first inequality in~\eqref{prop 1-1-4} has been proved, by taking $\tilde{C}_4 = 2 C_0$. In terms of the error estimate for the Lagrange multiplier, the following bounds are observed:  
\begin{equation} 
\begin{aligned} 
  & 
  \| {\cal G}_h^\frac12 \nabla_h \Phi^n \|_2 \le \| \nabla_h \Phi^n \|_2 \le C^* | \Omega |^\frac12 , \quad  
  \| {\cal G}_h^\frac12 \nabla_h \phi^n \|_2  \le ( 2 C_0)^\frac12 ,  \, \, \, 
  \mbox{(by~\eqref{ener stab-H1-1})} , 
\\
  & 
  | \langle {\cal G}_h^\frac12 \nabla_h ( \Phi^n + \phi^n ) ,   {\cal G}_h^\frac12 \nabla_h e^n \rangle | 
  \le ( \| {\cal G}_h^\frac12 \nabla_h \Phi^n \|_2  + \| {\cal G}_h^\frac12 \nabla_h \phi^n \|_2 ) 
  \| {\cal G}_h^\frac12 \nabla_h e^n \|_2 
\\
  \le & 
  ( C^* | \Omega |^\frac12 + ( 2 C_0)^\frac12 )  \| {\cal G}_h^\frac12 \nabla_h e^n \|_2 , 
\\
  & 
  | \langle V(\mathbf{x}) ( \Phi^n + \phi^n ) , e^n \rangle |  
  \le \| V \|_\infty ( \| \Phi^n \|_\infty + \| \phi^n \|_2 ) \| e^n \|_2 
\\
  \le & 
    | \Omega |^\frac12 \| V \|_\infty ( C^* + \tilde{C}_1 ) \| e^n \|_2 , 
\\
  & 
  | \langle |\phi^n|^2 \phi^n, e^n \rangle | 
  \le \| \phi^n \|_\infty^2 \cdot \| \phi^n \|_2 \cdot \| e^n \|_2 
  \le \tilde{C}_1^3 | \Omega |^\frac12 \| e^n \|_2 , 
\\
  & 
  | \langle {\cal NLE}^n , \Phi^n \rangle | 
  \le  \| {\cal NLE}^n \|_2 \cdot \| \Phi^n \|_2  
  \le \tilde{C}_2 \| e^n \|_2 \cdot C^* | \Omega|^\frac12 
  \le \tilde{C}_2 C^* | \Omega|^\frac12 \| e^n \|_2 . 
\end{aligned} 
  \label{prop 1-8} 
\end{equation} 
As a consequence, we arrive at the following estimate 
\begin{equation} 
\begin{aligned} 
  | e_\lambda^n | \le & \frac12 | \langle {\cal G}_h^\frac12 \nabla_h ( \Phi^n + \phi^n ) , 
   {\cal G}_h^\frac12 \nabla_h e^n \rangle  | 
   + | \langle V(\mathbf{x}) ( \Phi^n + \phi^n ) , e^n \rangle | 
\\
   & 
   + \beta ( | \langle |\phi^n|^2 \phi^n, e^n \rangle | 
   + | \langle {\cal NLE}^n , \Phi^n \rangle | ) 
   \le \tilde{C}_5 ( \| e^n \|_2 + \| {\cal G}_h^\frac12 \nabla_h e^n \|_2 ) , 
\end{aligned} 
  \label{prop 1-9} 
\end{equation} 
with $\tilde{C}_5 = \max (  \frac12 ( C^* | \Omega |^\frac12 + ( 2 C_0)^\frac12 ) ,  
| \Omega |^\frac12 ( \| V \|_\infty ( C^* + \tilde{C}_1 ) + \beta ( \tilde{C}_1^3  
+ \tilde{C}_2 C^* ) )$. The proof of Proposition~\ref{prop: convergence-prelim} is completed. 
  

\section{Proof of Proposition~\ref{prop: renormalization}} \label{appendix: prop 3} 


Based on a rewritten form~\eqref{lem 1-5} of the numerical solution at the intermediate stage, we see that 
\begin{equation} 
\begin{aligned} 
  & 
  \| \mathcal{G}_h \Delta_h \phi^n  \| \le \| \Delta_h \phi^n \|_2 \le \tilde{C}_1 , \quad 
  \mbox{(by~\eqref{a priori-3})} , 
\\
  & 
  \| V({\bf x}) \phi^n \|_2 \le \| V \|_\infty \cdot \| \phi^n \|_2 \le C^* \cdot 1 = C^* ,  \quad 
  \| \lambda^n \phi^n \|_2  = \lambda^n \| \phi^n \|_2 \le \lambda^n \le \tilde{C}_4 , 
\\
  & 
  \| |\phi^n|^2\phi^n \|_2 \le \| \phi^n \|_\infty^2 \cdot \| \phi^n \|_2 
  \le \tilde{C}_1^2 \cdot 1 = \tilde{C}_1^2 ,  \quad \mbox{so that} 
\\
  & 
  \| \tilde{\phi}^{n+1} - \phi^n \|_2 \le \frac{1}{\frac{1}{\tau} +A}  
   \Big( \frac{1}{2} \| \mathcal{G}_h \Delta_h \phi^n \|_2 
 + \| V({\bf x})\phi^n \|_2 + \beta \| |\phi^n|^2\phi^n \|_2 + \lambda^n \| \phi^n \|_2 \Big) 
\\
  & \qquad \qquad \qquad  
  \le \hat{C}_1 \tau , \quad \mbox{with} \, \, \, 
  \hat{C}_1 = \frac{\tilde{C}_1}{2} + C^* + \tilde{C}_4 + \beta \tilde{C}_1^2 .   
\end{aligned} 
  \label{L2 est-1} 
\end{equation} 
Notice that $\hat{C}_1$ only depends on the exact solution. Meanwhile, by the $\ell^2$ orthogonality~\eqref{lem 1-1-2} between $\phi^n$ and $\tilde{\phi}^{n+1} - \phi^n$, an $\ell^2$ inner product expansion reveals that 
\begin{equation} 
\begin{aligned} 
  1 \le \| \tilde{\phi}^{n+1} \|_2^2 = & \| \phi^n \|_2^2 + \| \tilde{\phi}^{n+1} - \phi^n \|_2^2 
  + 2 \langle \tilde{\phi}^{n+1} - \phi^n , \phi^n \rangle 
\\
  = & 
   1 + \| \tilde{\phi}^{n+1} - \phi^n \|_2^2 
 \le  1 + \hat{C}_1^2 \tau^2 .  
\end{aligned} 
  \label{L2 est-2} 
\end{equation} 
As a consequence, taking a square root on both sides leads to 
\begin{equation} 
  1 \le \| \tilde{\phi}^{n+1} \|_2 \le  ( 1 + \hat{C}_1^2 \tau^2 )^\frac12 
  \le  1 + \frac{\hat{C}_1^2}{2} \tau^2 , 
  \label{L2 est-3} 
\end{equation} 
in which inequality $\sqrt{1 +a} \le 1 + \frac{a}{2}$ (for $a \ge 0)$) has been applied. This has proved inequality~\eqref{prop 2-1-0}, by taking $\tilde{C}_6 = \frac12 \hat{C}_1^2$. 

Moreover, the renormalization formula $\phi^{n+1} = \frac{\tilde{\phi}^{n+1}}{\| \tilde{\phi}^{n+1} \|_2}$ implies the following facts: 
\begin{equation} 
\begin{aligned} 
  & 
  \tilde{\phi}^{n+1} - \phi^{n+1} = ( \| \tilde{\phi}^{n+1} \|_2 -1 ) \phi^{n+1} ,  \quad 
  \| \tilde{\phi}^{n+1} - \phi^{n+1} \|_2 =  \| \tilde{\phi}^{n+1} \|_2 -1 , 
\\
  & 
  \tilde{e}^{n+1} = e^{n+1} - (  \tilde{\phi}^{n+1} - \phi^{n+1} ) 
  =  e^{n+1} - ( \| \tilde{\phi}^{n+1} \|_2 -1 ) \phi^{n+1} . 
\end{aligned} 
   \label{prop 2-2} 
\end{equation} 
Meanwhile, the discrete functions $\phi^{n+1}$, $\Phi^{n+1}$ and $e^{n+1} = \Phi^{n+1} - \phi^{n+1}$ form a functional triangle, with $\| \phi^{n+1} \|_2 = \| \Phi^{n+1} \|_2 =1$. Furthermore, the identity that $\Phi^{n+1} = \phi^{n+1} + e^{n+1}$ indicates the following observation 
\begin{equation} 
\begin{aligned} 
  & 
  \| \Phi^{n+1} \|_2^2 = \| \phi^{n+1} \|_2^2 + \| e^{n+1} \|_2^2 + 2 \langle \phi^{n+1} , e^{n+1} \rangle ,  
\\
  & \mbox{so that} \quad 
  \langle \phi^{n+1} , e^{n+1} \rangle = - \frac12 \| e^{n+1} \|_2^2 \le 0 ,  \quad 
  \mbox{since} \, \, \, \| \phi^{n+1} \|_2 = \| \Phi^{n+1} \|_2 =1 . 
\end{aligned} 
  \label{prop 2-3} 
\end{equation} 
Subsequently, its combination with~\eqref{prop 2-2} leads to the following estimate: 
\begin{equation} 
\begin{aligned} 
  \| \tilde{e}^{n+1} \|_2^2 = & \| e^{n+1} \|_2^2 + ( \| \tilde{\phi}^{n+1} \|_2 -1 )^2 \| \phi^{n+1} \|_2^2 
  - 2 ( \| \tilde{\phi}^{n+1} \|_2 -1 )  \langle  e^{n+1} , \phi^{n+1} \rangle 
\\
  \ge & 
   \| e^{n+1} \|_2^2 + ( \| \tilde{\phi}^{n+1} \|_2 -1 )^2 ,  
\end{aligned} 
   \label{prop 2-4} 
\end{equation} 
in which the fact that $\| \phi^{n+1} \|_2 =1$ and the a-priori estimate $\| \tilde{\phi}^{n+1} \|_2 \ge 1$ have been applied in the derivation. This proves the left inequality in~\eqref{prop 2-1-1}. The right inequality of \eqref{prop 2-1-1} comes from a direct application of Cauchy inequality: 
\begin{equation} 
\begin{aligned} 
  \| \tilde{e}^{n+1} \|_2^2 = & \| e^{n+1} \|_2^2 + ( \| \tilde{\phi}^{n+1} \|_2 -1 )^2 \| \phi^{n+1} \|_2^2 
  - 2 ( \| \tilde{\phi}^{n+1} \|_2 -1 )  \langle  e^{n+1} , \phi^{n+1} \rangle 
\\
  \le & 
   2 ( \| e^{n+1} \|_2^2 + ( \| \tilde{\phi}^{n+1} \|_2 -1 )^2 ) .   
\end{aligned} 
   \label{prop 2-5} 
\end{equation} 

The estimate of $\Delta_h e^{n+1}$ in terms of $\Delta_h \tilde{e}^{n+1}$ turns out to be more challenging. Based on a simple fact, $\Delta_h \phi^{n+1} = \frac{\Delta_h \tilde{\phi}^{n+1}}{\| \tilde{\phi}^{n+1} \|_2}$, the following expansion is observed:  
\begin{equation} 
\begin{aligned} 
  \Delta_h e^{n+1} = & \Delta_h \Phi^{n+1} - \Delta_h \phi^{n+1} 
  = \Delta_h \Phi^{n+1} - \Delta_h \tilde{\phi}^{n+1} 
  + \frac{( \| \tilde{\phi}^{n+1} \|_2 -1) \Delta_h \tilde{\phi}^{n+1}}{\| \tilde{\phi}^{n+1} \|_2} 
\\
  = & 
   \Delta_h \tilde{e}^{n+1} + ( \| \tilde{\phi}^{n+1} \|_2 -1) \Delta_h \phi^{n+1} . 
\end{aligned} 
  \label{prop 2-6-1} 
\end{equation} 
Meanwhile, the a-priori assumption~\eqref{a priori-4} indicates that 
\begin{equation} 
  \| \Delta_h \tilde{\phi}^{n+1} \|_2 \le \| \Delta_h \Phi^{n+1} \|_2  + \| \Delta_h \tilde{e}^{n+1} \|_2  
  \le C^* + 2 ( \tau^\frac38 + h^\frac54) \le C^* + \frac12 = \tilde{C}_1 , 
  \label{prop 2-6-2} 
\end{equation} 
provided that $\tau$ and $h$ are sufficiently small. Furthermore, we see that 
\begin{equation} 
  \| \Delta_h \phi^{n+1} \|_2 = \frac{\| \Delta_h \tilde{\phi}^{n+1} \|_2}{\| \tilde{\phi}^{n+1} \|_2} 
  \le \frac{\tilde{C}_1}{\| \tilde{\phi}^{n+1} \|_2} \le \tilde{C}_1 , \quad 
  \mbox{since} \, \, \, \, \| \tilde{\phi}^{n+1} \|_2 \ge 1 . 
  \label{prop 2-6-3} 
\end{equation} 
Subsequently, a discrete $\ell^2$ inner expansion is applied to~\eqref{prop 2-6-1}: 
\begin{equation} 
\begin{aligned} 
  & 
  \langle \Delta_h \tilde{e}^{n+1} , \Delta_h \phi^{n+1} \rangle  
  \le \| \Delta_h \tilde{e}^{n+1} \|_2 \cdot \| \Delta_h \phi^{n+1} \|_2 
  \le \tilde{C}_1  \| \Delta_h \tilde{e}^{n+1} \|_2 , 
\\
  & 
  2 ( \| \tilde{\phi}^{n+1} \|_2 -1) \langle \Delta_h \tilde{e}^{n+1} , \Delta_h \phi^{n+1} \rangle 
  \le 2 \tilde{C}_1 ( \| \tilde{\phi}^{n+1} \|_2 -1) \| \Delta_h \tilde{e}^{n+1} \|_2 
\\
  & \qquad \qquad \qquad \qquad \qquad 
  \le \tau \| \Delta_h \tilde{e}^{n+1} \|_2^2  
  + 4 \tilde{C}_1^2 \tau^{-1} ( \| \tilde{\phi}^{n+1} \|_2 -1)^2 , 
\\
  & 
  \| \Delta_h e^{n+1}\|_2^2  =   
  \|  \Delta_h \tilde{e}^{n+1} \|_2^2 + ( \| \tilde{\phi}^{n+1} \|_2 -1)^2 \| \Delta_h \phi^{n+1}\|_2^2  
\\
  & \qquad \qquad \qquad 
   + 2 ( \| \tilde{\phi}^{n+1} \|_2 -1) \langle \Delta_h \tilde{e}^{n+1} , \Delta_h \phi^{n+1} \rangle 
\\
  & \qquad \qquad \quad 
  \le \|  \Delta_h \tilde{e}^{n+1} \|_2^2 + \tilde{C}_1^2 ( \| \tilde{\phi}^{n+1} \|_2 -1)^2  
  + \tau \| \Delta_h \tilde{e}^{n+1} \|_2^2  
\\
  & \qquad \qquad \qquad 
  + 4 \tilde{C}_1^2 \tau^{-1} ( \| \tilde{\phi}^{n+1} \|_2 -1)^2 
\\
  & \qquad \qquad \quad 
  \le ( 1+ \tau) \|  \Delta_h \tilde{e}^{n+1} \|_2^2  
  + 5 \tilde{C}_1^2 \tau^{-1} ( \| \tilde{\phi}^{n+1} \|_2 -1)^2  , 
\end{aligned} 
  \label{prop 2-6-4} 
\end{equation} 
provided that $\tau$ is sufficiently small. Therefore, inequality~\eqref{prop 2-1-2} has been proved, by taking $\tilde{C}_6 =  5 \tilde{C}_1^2$. In addition, inequality of \eqref{prop 2-1-3} comes from a direct application of Cauchy inequality, by taking $\tilde{C}_7 = 2 \tilde{C}_1^2$:  
\begin{equation} 
\begin{aligned} 
  & 
  \Delta_h \tilde{e}^{n+1}\|_2  =   
   \Delta_h e^{n+1}  - ( \| \tilde{\phi}^{n+1} \|_2 -1) \Delta_h \phi^{n+1} ,  \quad \mbox{so that} 
\\
  & 
  \| \Delta_h \tilde{e}^{n+1}\|_2^2  \le   
  2 ( \|  \Delta_h e^{n+1} \|_2^2 + ( \| \tilde{\phi}^{n+1} \|_2 -1)^2 \| \Delta_h \phi^{n+1}\|_2^2 ) 
\\
  & \qquad \qquad \quad 
  \le 2 ( \|  \Delta_h e^{n+1} \|_2^2 + \tilde{C}_1^2 ( \| \tilde{\phi}^{n+1} \|_2 -1)^2 ) .   
\end{aligned} 
  \label{prop 2-6-5} 
\end{equation} 
This finishes the proof of Proposition~\ref{prop: renormalization}.

\end{document}